\date{\today}
\newtheorem{theorem}{Theorem}
\newtheorem{lemma}[theorem]{Lemma}
\makeatletter\@namedef{subjclassname@2020}{\textup{2020} Mathematics Subject Classification}
\newcommand{\msc}[1]{\href{https://zbmath.org/classification/?q=cc:#1}{#1}}
\newcommand{\Email}[1]{\rm{\it E-mail:}\/~\href{mailto:#1}{\textsf{#1}}}
\newcommand{\Emails}[2]{\rm{\it E-mails:}\/~\href{mailto:#1}{\textsf{#1}}, \href{mailto:#2}{\textsf{#2}}}
\newcommand{\1}{\mathbbm1}
\newcommand{\R}{{\mathord{\mathbb R}}}
\newcommand{\N}{{\mathord{\mathbb N}}}
\newcommand{\Sp}{{\mathord{\mathbb S}}}
\newcommand{\Sph}{\mathbb{S}}
\newcommand{\bb}{g}
\renewcommand{\(}{\left(}
\renewcommand{\)}{\right)}
\newcommand{\ird}[1]{\int_{\R^d}{#1}\,dx}
\newcommand{\isd}[1]{\int_{\Sph^d}{#1}\,d\mu_d}
\newcommand{\nrm}[2]{\left\|{#1}\right\|_{\mathrm L^{#2}(\R^d)}}
\newcommand{\nrmS}[2]{\left\|{#1}\right\|_{\mathrm L^{#2}(\mathbb S^d)}}
\newcommand{\Deficit}{\boldsymbol{\delta}}
\newcommand{\Imu}{\mathscr I}
\newcommand{\be}[1]{\begin{equation}\label{#1}}
\newcommand{\ee}{\end{equation}}
\newcommand{\mmu}{\boldsymbol{\mu}}
\newcommand{\irdmu}[1]{\int_{\R^d}{#1}\,d\mmu_d}
\newcommand{\irNg}[1]{\int_{\R^N}{#1}\,d\gamma}
\newcommand{\e}{\mathsf e}
\renewcommand{\i}{\mathsf i}
\newcommand{\Hdot}{\dot{\mathrm H}^1(\R^d)}
\newcommand{\Emph}[1]{\textit{\textbf{#1}}}
\newcommand{\bfA}{\mathrm A}
\newcommand{\nc}{\normalcolor}
\newcounter{taggedeq}
\pretocmd{\equation}{\stepcounter{taggedeq}}{}{}
\begin{document}

\title[Improvements and stability for some interpolation inequalities]{A short review on Improvements and stability\\ for some interpolation inequalities}

\author[J.~Dolbeault]{Jean Dolbeault}
\author[M.~J.~Esteban]{Maria J.~Esteban}
\address[J.~Dolbeault, M.~J.~Esteban]{CEREMADE (CNRS UMR No.~7534), PSL University, Universit\'e Paris-Dauphine, Place de Lattre de Tassigny, 75775 Paris 16, France. \newline\Emails{dolbeaul@ceremade.dauphine.fr}{esteban@ceremade.dauphine.fr}}

\author[A.~Figalli]{Alessio Figalli}
\address[A.~Figalli]{Mathematics Department, ETH Z\"urich, Ramistrasse 101, 8092 Z\"urich, Switzerland.\newline \Email{alessio.figalli@math.ethz.ch}}

\author[R.~L.~Frank]{Rupert L.~Frank}
\address[R.~L.~Frank]{Department of Mathematics, LMU Munich, Theresienstr. 39, 80333 M\"unchen, Germany, and Munich Center for Quantum Science and Technology, Schellingstr.~4, 80799 M\"unchen, Germany, and Mathematics 253-37, Caltech, Pasadena, CA 91125, USA.\newline
\Email{r.frank@lmu.de}}

\author[M.~Loss]{Michael Loss}
\address[M.~Loss]{School of Mathematics, Georgia Institute of Technology Atlanta, GA 30332, United States of America. \Email{loss@math.gatech.edu}}


\begin{abstract}
In this paper, we present recent stability results with explicit and dimensionally sharp constants and optimal norms for the Sobolev inequality and for the Gaussian logarithmic Sobolev inequality obtained by the authors in \cite{DEFFL}. The stability for the Gaussian logarithmic Sobolev inequality was obtained as a byproduct of the stability for the Sobolev inequality. Here we give a new, direct, alternative proof. We also discuss improved versions of interpolation inequalities based on the \it carr\'e du champ \rm method.\vspace*{-0.5cm}
\end{abstract}

\subjclass[2020]{\scriptsize Primary: \msc{49J40}; Secondary: \msc{26D10}, \msc{35A23}, \msc{58E35}.}

\keywords{\scriptsize Sobolev inequality; logarithmic Sobolev inequality; interpolation; Gagliardo-Nirenberg inequalities; stability; rearrangement; Steiner symmetrization; heat equation; fast diffusion equations}

\thanks{\scriptsize \copyright~2024 by the authors. Reproduction of this article by any means permitted for noncommercial purposes. \hbox{\href{https://creativecommons.org/licenses/by/4.0/legalcode}{CC-BY 4.0}}}

\maketitle
\thispagestyle{empty}

\section{Introduction and main results}\label{Sec:intro}

Let us assume that $d\ge3$ without further notice. The classical \it Sobolev inequality \rm on $\R^d$, can be written as follows:
\be{Sob}\tag{S}
\Vert \nabla f \Vert^2_{\mathrm L^2(\R^d)}\ge S_d\,\Vert f \Vert^2_{\mathrm L^{2^*}(\R^d)}\quad\forall\,f\in\dot{\mathrm H}^1(\R^d)\,,
\ee
where $2^*=\frac{2\,d}{d-2}$ is the Sobolev exponent, $S_d = \tfrac14\,d\,(d-2)\,| \mathbb S^d|^{2/d}$ is the sharp Sobolev constant, and~$|\mathbb S^d|$ denotes the volume of the unit sphere $\mathbb S^d\subset\R^{d+1}$.
Here $\dot{\mathrm H}^1(\R^d)$ denotes the closure of $C^\infty_c(\R^d)$ with respect to the seminorm $| f|_{\dot{\mathrm H}^1(\R^d)}:=\Vert \nabla f \Vert^2_{\mathrm L^2(\R^d)}$. As proved in \cite{Aubin,Talenti} (see also \cite{Rodemich, Rosen}), equality in~\eqref{Sob} holds if $f$ is one of the \it Aubin-Talenti \rm functions, that is, one of the functions belonging to the $(d+2)$-dimensional \it Aubin-Talenti manifold\rm
\[
\mathcal M:=\left\{g_{a,b,c}\,:\,(a,b,c)\in(0,+\infty)\times\R^d\times\R\right\}\quad\mbox{with}\quad g_{a,b,c}(x) := c\(a+|x-b|^2\)^{-\frac{d-2}2}\,.
\]
In fact, there is equality in \eqref{Sob} if and only if $f$ is in $\mathcal M$ according to~\cite{Lieb,GidasNiNirenberg,CaffarelliGidasSpruck}.

Applying the inverse of the stereographic projection and integrating on the sphere $\mathbb S^d$ with respect to the uniform probability measure $d\mu_d$, one can rewrite the above inequality as
\be{Sob-sphere}
\nrmS{\nabla u}2^2\ge\tfrac14\,d\,(d-2)\(\nrmS u{2^*}^2-\nrmS u2^2\)\quad \forall\, u\in \mathrm H^1(\mathbb S^d)
\ee
and state, equivalently, that the only functions in $\mathrm H^1(\mathbb S^d)$ for which there is equality in~\eqref{Sob-sphere} are the functions $\omega\mapsto G_{b,c}(\omega):=c\,(1+b\cdot \omega)^{-(d-2)/2}$ where $b\in B_1:=\{b\in\R^{d+1}\,:\,|b|<1\}$ and $c\in\R$ are constants. One can embed \eqref{Sob-sphere} in the following family of \it Gagliardo-Nirenberg-Sobolev inequalities:\rm
\be{GNS}\tag{GNS}
\nrmS{\nabla u}2^2\ge\frac d{p-2}\(\nrmS up^2-\nrmS u2^2\)\quad \forall\, u\in \mathrm H^1(\mathbb S^d)\,,\; \forall\, p\in(1, 2)\cup (2, 2^*]\,.
\ee
As proved by Bidaut-V\'eron and V\'eron~\cite{BV-V} and Beckner~\cite{MR1230930} for $p>2$ (see also \cite{MR772092, Bakry-Emery85, MR808640} if $p\le 2^\#$), the constant $d/(p-2)$ is the best possible constant in \eqref{GNS}. Here $2^\#:=(2\,d^2+1)/(d-1)^2<2^*$ denotes the Bakry-Emery exponent. If $p<2^*$, the only optimizers in $\mathrm H^1(\mathbb S^d)$ for~\eqref{GNS} are the constant functions. The \it carr\'e du champ \rm method used in~\cite{MR772092, Bakry-Emery85, MR808640} relies on the linear heat equation, which induces the limitation $p\le 2^\#$. This limitation is not technical as shown in \cite{MR3640894}. The whole range $p\in[1, 2)\cup (2, 2^*]$ was covered using a \it carr\'e du champ \rm method based on nonlinear diffusion equations by Demange and Dolbeault-Esteban-Loss in \cite{MR2381156, 1302}. The case $p=2$ (that can be obtained from \eqref{GNS} by taking the limit $p\to 2$) is the \it logarithmic Sobolev inequality \rm
\be{Ineq:logSob}\tag{LSI}
\nrmS{\nabla u}2^2\ge\frac d2\,\isd{|u|^2\,\ln\(\frac{|u|^2}{\nrmS u2^2}\)}\quad\forall\,u\in\mathrm H^1(\mathbb S^d)\setminus\{0\}\,,
\ee
where the only optimizers are the constant functions. Inequality~\eqref{Ineq:logSob} can also be proved by the \it carr\'e du champ \rm method based on the heat equation. 

A remarkable feature of the \it carr\'e du champ \rm method is that additional terms appear in the computations. This has been used in \cite{MR3177759} to establish, for all $p\in(2,2^*)$, the \it improved interpolation inequalities \rm
\be{improved-sub}
\nrmS{\nabla u}2^2-\frac d{p-2}\(\nrmS up^2-\nrmS u2^2\)\ge\nrmS u2^2\,\Psi_{d,p}\!\(\frac{\nrmS{\nabla u}2^2}{\nrmS u2^2}\)\quad\forall\,u\in\mathrm H^1(\mathbb S^d)\setminus\{0\}
\ee
for some convex function $\Psi_{d,p}$ such that $\Psi_{d,p}(0)=\Psi_{d,p}'(0)=0$ and $\Psi_{d,p}(s)>0$ if $s>0$. Inequality~\eqref{improved-sub} is obviously stronger than \eqref{GNS} and can be used to prove that the equality case in \eqref{GNS} is realized only by constant functions. If we test this inequality with $u_\varepsilon(\omega)=1+\varepsilon\,\mathsf b\cdot\omega$ for a given $\mathsf b\in B_1$, an elementary computation shows that there is a cancellation of the $O(\varepsilon^2)$ terms as $\varepsilon\to0$ and the first non-zero term is of the order of $O(\varepsilon^4)$.

If we denote by $\Pi_1$ the $\mathrm L^2(\mathbb S^d)$ projection on the space generated by the coordinate functions $\omega_i$ with $i=1$, $2$, \ldots, $d$, we learn from \cite[Theorem~6]{Brigati_2023} that there is an explicit constant $\kappa=\kappa(p,d)\in(0,1)$, depending on $d$ and $p\in[1,2)\cup(2,2^*)$ such that, for all $u\in\mathrm H^1(\mathbb S^d)$,
\begin{equation}\label{eq:stabgns4}
\nrmS{\nabla u}2^2-\frac d{p-2}\(\nrmS up^2-\nrmS u2^2\)\ge\kappa\(\tfrac{\nrmS{\nabla\Pi_1u}2^4}{\nrmS{\nabla u}2^2+\nrmS u2^2}+\nrmS{\nabla(\mathrm{Id}-\Pi_1)\kern 1pt u}2^2\)\,.
\end{equation}
The proof relies on \eqref{improved-sub} and on a decomposition in spherical harmonics directly inspired by~\cite{Frank_2022}. Inequality \eqref{eq:stabgns4} improves over \eqref{GNS}, but this improvement degenerates as $p\to 2^*$. Indeed, any constant $\kappa(p,d)$ for which \eqref{eq:stabgns4} holds necessarily satisfies $\lim_{p\to2^*}\kappa(p,d)=0$, for otherwise we had an inequality at $p=2^*$ whose right side vanishes only for constants while, according to our discussion of \eqref{Sob-sphere}, the left side vanishes for all function $G_{b,c}$.

Let us notice that Inequality~\eqref{improved-sub} can take various forms. For instance, if $p\in(2,2^\#)$ and $1/\theta=1+(\frac{d-1}{d+2})^2\,(p-1)\,(2^\#-p)/(p-2)$, we read from~\cite[Ineq.~(2.4)]{Dolbeault_2020} that \eqref{improved-sub} takes the form
\[
\nrmS{\nabla u}2^2\ge\frac{d\,\theta}{p-2}\(\nrmS up^{2/\theta}\,\nrmS u2^{2-2/\theta}-\nrmS u2^2\)\quad\forall\,u\in\mathrm H^1(\mathbb S^d)\,,
\]
which is a strict improvement compared to~\eqref{GNS}, as can be seen using H\"older inequalities. The case $p=2$ is also covered, except that in the left-hand side~of \eqref{improved-sub}, the deficit of \eqref{GNS} has to be replaced by the deficit of \eqref{Ineq:logSob}. 

Without entering into the details, let us quote some related results. By a direct variational approach, an improved inequality like~\eqref{improved-sub} is proved in \cite[Theorem~2]{Frank_2022}, in the subcritical range, for some $\Psi_{d,p}(s)\sim s^2$. Using nonlinear flows and appropriate orthogonality constraints, improved inequalities with $\Psi_{d,p}(s)\sim s$ are known from \cite{MR3640894}. Both results are unified in~\cite{Brigati_2023}. Let us finally mention that improved inequalities are proved in \cite{BDNS} with explicit constants, not on $\mathbb S^d$ but on $\R^d$, for the Gagliardo-Nirenberg-Sobolev inequalities using entropy methods and regularization effects for fast diffusion flows, but with some restrictions on the decay of the functions at infinity.

The Gaussian measure on $\R^N$ can be seen as an infinite dimensional limit $d\to+\infty$ of the uniform probability measure on the sphere of dimension $d$ and radius $\sqrt d$ tested against functions depending only on a finite number $N$ of coordinates: see for instance \cite{MR353471,https://doi.org/10.48550/arxiv.2302.03926,DEFFL}. Since $\lim_{d\to+\infty}2^*=2$, it also turns out that~\eqref{Sob} has to be replaced by a Gaussian interpolation inequality as follows. On $\R^N$, with $N\ge1$, let us consider the {\it Gaussian measure} $d\gamma(x) := e^{-\,\pi\,|x|^2}\,dx$. With $\mathrm L^2(\gamma):=\mathrm L^2(\R^N,d\gamma)$, if $\mathrm H^1(\gamma)$ denotes the space of all $u\in \mathrm L^2(\gamma)$ with distributional gradient in $\mathrm L^2(\gamma)$, the \it logarithmic Sobolev inequality \rm is:
\be{LSI}\tag{LSI}
\int_{\R^N} |\nabla u|^2\,d\gamma\ge\pi \int_{\R^N} |u|^2 \ln\( \frac{|u|^2}{\| u \|_{\mathrm L^2(\gamma)}^2}\)\,d\gamma\quad\forall\,u\in\mathrm H^1(\gamma)\setminus\{0\}\,.
\ee
According to the result of Carlen \cite[Theorem~5]{MR1132315}, equality in \eqref{LSI} holds if and only if for some $a\in\R^N$ and $c\in\R$,
\be{eq:optim-logSob}
u(x) = c\,e^{a\cdot x}\,.
\ee
Improved forms of the inequality are also known for instance from \cite{MR3567822,MR3493423, MR4116725,Brigati_2023}, under some restrictions. The article \cite{MR4475270} makes a very interesting connection between deficit estimates for \eqref{Ineq:logSob} (for certain restricted classes of functions) and the Mahler conjecture in convex geometry. A detailed list of earlier results and references can be found in~\cite{MR4305006}.

\medskip Now let us turn our attention to \it stability \rm issues for~\eqref{Sob},~\eqref{GNS} and~\eqref{LSI}.

\smallskip\noindent$\bullet$ \Emph{Stability for the Sobolev inequality}. In~\cite{BrezisLieb} Brezis and Lieb asked the following question:

\smallskip \it (Q) Do there exist constants $\kappa$, $\alpha>0$ such that the \emph{Sobolev deficit} $\Deficit$ controls some distance $\mathbf{d}$ from the Aubin-Talenti manifold $\mathcal M$ according to\rm
\[
\Deficit(f):= \Vert \nabla f \Vert^2_{\mathrm L^2(\R^d)}- S_d\, \Vert f \Vert^2_{\mathrm L^{2^*}(\R^d)} \ge \kappa\,\mathbf d(f,\mathcal M)^\alpha\quad?
\]
The `best possible answer' to this question would involve finding the strongest possible topology to define the distance $\mathbf d$ and the best possible constant $\kappa$ and exponent $\alpha$. The first answer to Brezis and Lieb's question was given by Bianchi and Egnell in~\cite{BianchiEgnell}: there is a constant $C_{d,\rm BE}>0$ such that
\be{eq:bianchi-egnell0}
\Deficit(f)\ge C_{d, \rm BE}\,\inf_{\bb\in\mathcal M}\Vert \nabla f - \nabla\bb \Vert^2_{\mathrm L^2(\R^d)}\quad\forall\,f \in \dot{\mathrm H}^1(\R^d)\,.
\ee
Similar results for other inequalities have been proved using the strategy of Bianchi and Egnell: see, for example, \cite{MR3179693}. The main drawback of this strategy is that no explicit estimate of $C_{d,\rm BE}$ is known nor its dependence on the dimension~$d$. Recently, in \cite{DEFFL}, we proved the following result.
\begin{theorem}[{\cite[Theorem~1.1]{DEFFL}}]\label{main} There is an explicit constant $\beta>0$ such that for any $d\ge3$,
\be{stabgrandd}
\Vert \nabla f \Vert^2_{\mathrm L^2(\R^d)} - S_d\,\Vert f \Vert^2_{\mathrm L^{2^*}(\R^d)}\ge\;\frac\beta d\;\inf_{\bb\in\mathcal M} \Vert \nabla f - \nabla\bb \Vert^2_{\mathrm L^2(\R^d)}\quad\forall\,f \in \dot{\mathrm H}^1(\R^d)\,.
\ee
\end{theorem}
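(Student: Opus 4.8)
The plan is to reduce the quantitative stability estimate with explicit dimensional dependence to a combination of a local (spectral) analysis near the Aubin--Talenti manifold $\mathcal M$ and a global compactness-type argument that, crucially, must be made quantitative. First I would normalize: by the conformal invariances of the Sobolev functional (dilations, translations, and the inversion coming from the stereographic projection), one may assume that the nearest point to $f$ on $\mathcal M$ is a fixed reference bubble, say $g = g_{1,0,c_0}$, and moreover that $f$ is already close to $g$ in $\dot{\mathrm H}^1$ — the far regime being handled separately. Writing $f = g + \varphi$ with $\varphi$ orthogonal (in the $\dot{\mathrm H}^1$ inner product) to the tangent space $T_g\mathcal M$, the key local step is a \emph{coercivity estimate with explicit constant}: expanding $\Deficit(g+\varphi)$ to second order, the Hessian of the Sobolev deficit at $g$ restricted to $(T_g\mathcal M)^\perp$ is positive definite, and one needs the bottom of its spectrum to be bounded below by a constant $c/d$. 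This is where the work of controlling the $d$-dependence concentrates: the relevant eigenvalue problem can be pushed, via the stereographic projection, to a Gagliardo--Nirenberg-type eigenvalue problem on $\mathbb S^d$, where the spectral gap for the linearized operator on the orthogonal complement of the low spherical harmonics can be computed explicitly in terms of $d$ (this is exactly the kind of spherical-harmonics decomposition used in \eqref{improved-sub} and in \cite{Brigati_2023, Frank_2022}).

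The second ingredient is to go from the infinitesimal (second-order) estimate to a genuine estimate valid in a fixed $\dot{\mathrm H}^1$-neighborhood of $\mathcal M$. Here I would Taylor-expand the deficit to second order with an explicit control on the cubic and higher remainder terms: one writes
\[
\Deficit(g+\varphi) = \tfrac12\,\mathrm{Hess}\,\Deficit(g)[\varphi,\varphi] + R(\varphi),
\qquad |R(\varphi)| \le C_d\,\|\nabla\varphi\|_{\mathrm L^2}^{2}\,\big(\|\nabla\varphi\|_{\mathrm L^2} + \|\nabla\varphi\|_{\mathrm L^2}^{2^*-2}\big),
\]
using the Sobolev embedding to pass between $\mathrm L^{2^*}$ and $\dot{\mathrm H}^1$ norms; tracking $C_d$ through these interpolation/Sobolev steps (the Sobolev constant $S_d$ itself behaves like a known power of $d$) gives the far-from-optimal but explicit dimensional constants. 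Combined with the coercivity lower bound $\ge (c/d)\,\|\nabla\varphi\|_{\mathrm L^2}^2$, absorbing the remainder yields \eqref{stabgrandd} with constant $\beta/d$ on a ball $\{\|\nabla\varphi\|_{\mathrm L^2} \le \varepsilon_d\}$ with $\varepsilon_d$ an explicit (possibly $d$-dependent) radius.

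The remaining, and I expect hardest, part is the \emph{region far from $\mathcal M$}: one must show there is an explicit $\eta_d>0$ such that $\Deficit(f) \ge \eta_d\,\|\nabla f\|_{\mathrm L^2}^2$ whenever $\inf_{g\in\mathcal M}\|\nabla f - \nabla g\|_{\mathrm L^2} \ge \varepsilon_d\,\|\nabla f\|_{\mathrm L^2}$ (after the scaling normalization $\|\nabla f\|_{\mathrm L^2}=1$). A pure compactness argument à la Bianchi--Egnell gives existence but no control, so instead I would rely on a \emph{quantitative rearrangement / competitor} strategy: using the rearrangement inequality and the stability of the P\'olya--Szeg\H{o} inequality, or alternatively the ``continuous Steiner symmetrization'' flow, one obtains that a function with small deficit must be quantitatively close to radial, and then an explicit ODE analysis of radial near-optimizers of the Sobolev inequality (Talenti's computation made quantitative) pins down the profile. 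The $1/d$ scaling in the final constant, rather than a $d$-independent one, is presumably an artifact of these explicit estimates and of the Sobolev constant's growth; making every one of these steps effective with tracked constants — especially the quantitative symmetrization step, which is the classical bottleneck — is the crux of the argument, and this is precisely the technical content carried out in \cite{DEFFL}.
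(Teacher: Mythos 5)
Your high-level decomposition (local analysis near $\mathcal M$ plus a far-from-$\mathcal M$ regime) matches the paper, but the two key technical mechanisms you propose are the ones the paper specifically has to avoid, and a third step is missing. First, the local step: expanding $\Deficit(g+\varphi)$ to second order with a remainder bound of the form $|R(\varphi)|\le C_d\,\|\nabla\varphi\|_{\mathrm L^2}^2\bigl(\|\nabla\varphi\|_{\mathrm L^2}+\|\nabla\varphi\|_{\mathrm L^2}^{2^*-2}\bigr)$ yields a coercivity radius $\varepsilon_d$ that shrinks as $d\to\infty$ (you need $C_d\,\varepsilon_d\lesssim c/d$), and this $d$-dependent radius then propagates into the far-regime threshold and ruins the final $1/d$ scaling. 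The paper's actual innovation is precisely designed to circumvent this: one cuts $r$ pointwise into three pieces $r_1,r_2,r_3$ at heights $\gamma<M$ and treats each separately — a quadratic spectral-gap bound for the small piece, an improved spectral-gap bound for the medium piece (whose support has small measure), and the Sobolev inequality itself for the large piece, where the favorable factor $2/2^*<1$ absorbs the error — and it is exactly this device that makes the admissible neighborhood $\delta$ of $\mathcal M$ \emph{independent of $d$} (Theorem~\ref{thm:unifboundclose}), which is what the naive Taylor expansion cannot give.

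Second, for the far regime you invoke quantitative stability of the P\'olya--Szeg\H o inequality (or a quantitative symmetrization) to show a function with small deficit is close to radial, followed by a radial ODE analysis. This is a notoriously hard route and is not how the paper proceeds; in fact the whole point of the paper's Step~2 is to \emph{avoid} any quantitative rearrangement estimate. Instead, it uses the Carlen--Loss competing symmetries iteration $f_n=(\mathcal R U)^n f$, the dichotomy of Lemma~\ref{alternatives}, and a Brock/Christ-type \emph{continuous} rearrangement flow interpolating between two consecutive iterates, so that one can stop exactly at the boundary of the $\delta$-neighborhood; only the qualitative monotonicity of $\|\nabla\cdot\|_{\mathrm L^2}$ under rearrangement is used, and the far regime is reduced to the local estimate already proved, giving $\mathcal E(f)\ge\delta\,\Imu(\delta)$. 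Finally, your proposal does not address sign-changing $f$: the local and far analyses apply only to nonnegative functions (rearrangements require positivity), and the paper needs a separate concavity argument on $h_d(m)=m^{(d-2)/d}+(1-m)^{(d-2)/d}-1$ to reduce the general case to the nonnegative one.
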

\noindent This result is dimensionally sharp. Indeed, Theorem~\ref{main} can be rewritten as $C_{d,\rm BE}\ge\beta/d$. On the other hand, it was proved implicitly in~\cite{BianchiEgnell} (see also \cite{MR3179693}) that $C_{d,\rm BE}\le4/(4+d)$. This inequality is in fact strict: $C_{d,\rm BE}<4/(4+d)$, according to~\cite{arXiv:2210.08482}, and we learn from \cite{arXiv:2211.14185} that equality in \eqref{eq:bianchi-egnell0}, written with the optimal value of $C_{d,\rm BE}$, is achieved. Hence, Theorem~\ref{main} captures the dimensional behavior of $C_{d,\rm BE}$. For completeness, let us mention the extension of Theorem~\ref{main} in \cite{chen2023stability} to fractional Sobolev inequalities.

Using the inverse stereographic projection, the stability result of Theorem~\ref{main} can be rewritten on~$\mathbb S^d$ for any $u\in\mathrm H^1(\mathbb S^d)$ as
\begin{multline*}
\nrmS{\nabla u}2^2-\tfrac14\,d\,(d-2)\(\nrmS u{2^*}^2-\nrmS u2^2\)\\
\ge\;\frac\beta d\;\inf_{(b,c)\in B_1\times\R}\(\nrmS{\nabla u-\nabla G_{b,c}}2^2+\tfrac14\,d\,(d-2)\,\nrmS{u-G_{b,c}}2^2\)\,.
\end{multline*}

\smallskip\noindent$\bullet$ \Emph{Stability for the Gaussian logarithmic Sobolev inequality}. The interpretation of the Gaussian measure as the limit of uniform probability measures on $d$-dimensional spheres as $d\to+\infty$ and the explicit dimensional dependence of the stability constant of Theorem~\ref{main} provides us with a stability result for \eqref{LSI}.
\begin{theorem}[{\cite[Corollary~1.2]{DEFFL}}]\label{logsob} With $\beta>0$ as in Theorem~\ref{main}, for all $N\in\N$, we have
\be{LSI:stab}
\int_{\R^N} |\nabla u|^2\,d\gamma - \pi \int_{\R^N} |u|^2 \ln\( \frac{|u|^2}{\| u \|_{\mathrm L^2(\gamma)}^2}\)\,d\gamma\ge \frac{\beta\,\pi}2 \inf_{b\in\R^N\!,\,c\in\R} \int_{\R^N} \big|u - c\,e^{b\cdot x}\big|^2\,d\gamma\quad\forall\,u\in\mathrm H^1(\gamma)\,.
\ee
\end{theorem}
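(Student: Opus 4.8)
The plan is to deduce \eqref{LSI:stab} from the Sobolev stability estimate \eqref{stabgrandd} of Theorem~\ref{main} by a dimensional reduction, exploiting the fact recalled above that the Gaussian measure $\gamma$ on $\R^N$ is the $d\to\infty$ limit of the probability measures $g_\star^{2^*}\,dx$ attached to Aubin--Talenti functions $g_\star$ on $\R^d$, viewed through $N$ fixed rescaled coordinates. By density it is enough to prove \eqref{LSI:stab} for $u\in C_c^\infty(\R^N)\setminus\{0\}$, since both sides are continuous along $\mathrm H^1(\gamma)$-convergent sequences (the right-hand side being a squared distance in $\mathrm L^2(\gamma)$, the left-hand side by continuity of the Gaussian entropy on $\mathrm H^1(\gamma)$). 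Fix such a $u$ and $N\ge1$. For $d$ large, set $g_\star:=g_{1,0,c_d}$ with $c_d>0$ chosen so that $d\nu_d:=g_\star^{2^*}\,dx$ is a probability measure, and
\[
f_d(x):=g_\star(x)\,v_d(x)\,,\qquad v_d(x):=u\!\left(\sqrt{\tfrac{d}{2\pi}}\,(x_1,\dots,x_N)\right).
\]
Then $f_d\in\dot{\mathrm H}^1(\R^d)$ for $d$ large, $\nu_d$ concentrates on $\{|x|=1\}$, and the law under $\nu_d$ of $y:=\sqrt{d/(2\pi)}\,(x_1,\dots,x_N)$ converges weakly to $\gamma$; in particular $\|v_d\|_{\mathrm L^q(\nu_d)}\to\|u\|_{\mathrm L^q(\gamma)}$ for every $q$.

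The first step identifies the limit of the Sobolev deficit of $f_d$. Since $g_\star$ solves $-g_\star\Delta g_\star=S_d\,g_\star^{2^*}$ and $\int_{\R^d}g_\star^{2^*}\,dx=1$, the ground-state substitution $f=g_\star v$ gives the exact identity
\be{gss-id}
\Deficit(f_d)=I_d^{2/d}\int_{\R^d}(1+|x|^2)^2\,|\nabla v_d|^2\,d\nu_d+S_d\Big(\|v_d\|_{\mathrm L^2(\nu_d)}^2-\|v_d\|_{\mathrm L^{2^*}(\nu_d)}^2\Big),\qquad I_d:=\int_{\R^d}\frac{dx}{(1+|x|^2)^d}.
\ee
Now $2^*=2+\tfrac{4}{d-2}\to2$, so a Taylor expansion of $\|v_d\|_{\mathrm L^{2^*}(\nu_d)}^2-\|v_d\|_{\mathrm L^2(\nu_d)}^2$ in $2^*-2$ produces the Gaussian entropy; together with $(1+|x|^2)^2\to4$ in $\nu_d$-probability, $|\nabla v_d(x)|^2=\tfrac{d}{2\pi}|\nabla u(y)|^2$, and the Stirling asymptotics $d\,I_d^{2/d}\to\tfrac{\pi e}{2}$ and $S_d/d\to\tfrac{\pi e}{2}$, one obtains
\[
\Deficit(f_d)\ \xrightarrow[\,d\to\infty\,]{}\ e\left(\int_{\R^N}|\nabla u|^2\,d\gamma-\pi\int_{\R^N}u^2\ln\tfrac{u^2}{\|u\|_{\mathrm L^2(\gamma)}^2}\,d\gamma\right)=e\cdot\big(\text{LHS of }\eqref{LSI:stab}\big).
\]

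The second step bounds the distance term from below. For any $g\in\mathcal M$, applying the ground-state identity to $f_d-g=g_\star(v_d-g/g_\star)$ and dropping a nonnegative term gives $\|\nabla f_d-\nabla g\|_{\mathrm L^2(\R^d)}^2\ge S_d\int_{\R^d}(v_d-g/g_\star)^2\,d\nu_d$, hence $\inf_{g\in\mathcal M}\|\nabla f_d-\nabla g\|_{\mathrm L^2(\R^d)}^2\ge S_d\inf_{g\in\mathcal M}\|v_d-g/g_\star\|_{\mathrm L^2(\nu_d)}^2$. A direct computation shows $g_{a,b,c'}/g_\star=\tfrac{c'}{c_d}\big(\tfrac{1+|x|^2}{a+|x-b|^2}\big)^{(d-2)/2}$, which in the regime $a\to1$, $b=O(d^{-1/2})$ concentrated on the first $N$ coordinates, $c'/c_d$ bounded, converges as a function of $y$ to $c\,e^{b^\ast\cdot y}$, i.e.\ to an optimiser \eqref{eq:optim-logSob} of \eqref{LSI}. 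The crux is the bound
\[
\liminf_{d\to\infty}\ \inf_{g\in\mathcal M}\|v_d-g/g_\star\|_{\mathrm L^2(\nu_d)}^2\ \ge\ \inf_{b\in\R^N,\,c\in\R}\|u-c\,e^{b\cdot x}\|_{\mathrm L^2(\gamma)}^2.
\]
Granting it, $S_d/d\to\tfrac{\pi e}{2}$ and Theorem~\ref{main} applied to $f_d$ give in the limit $e\cdot(\text{LHS of }\eqref{LSI:stab})\ge\beta\cdot\tfrac{\pi e}{2}\inf_{b,c}\int_{\R^N}(u-c\,e^{b\cdot x})^2\,d\gamma$; dividing by $e$ yields \eqref{LSI:stab} on $C_c^\infty(\R^N)\setminus\{0\}$, and the general case follows by approximation.

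The hard part will be the last displayed $\liminf$: one must show that a minimising sequence $g^{(d)}\in\mathcal M$ cannot, after rescaling by $g_\star$, beat the exponentials in $\mathrm L^2(\nu_d)$. This is a concentration-compactness argument: along a subsequence, either the parameters of $g^{(d)}$ remain, after normalisation, in the good regime above---so that $g^{(d)}/g_\star\to c\,e^{b\cdot x}$ in $\mathrm L^2(\nu_d)$ and Fatou closes the estimate---or they escape ($g^{(d)}$ concentrating, spreading, or its centre drifting off), in which case $g^{(d)}/g_\star$ either oscillates on a scale that makes $\|v_d-g^{(d)}/g_\star\|_{\mathrm L^2(\nu_d)}$ diverge, or tends to $0$ in $\mathrm L^2(\nu_d)$, whence $\|v_d-g^{(d)}/g_\star\|_{\mathrm L^2(\nu_d)}^2\to\|u\|_{\mathrm L^2(\gamma)}^2\ge\inf_{b,c}\|u-c\,e^{b\cdot x}\|_{\mathrm L^2(\gamma)}^2$ (since $0$ lies in the $\mathrm L^2(\gamma)$-closure of $\{c\,e^{b\cdot x}\}$); a Riesz-rearrangement inequality in the variables $x_{N+1},\dots,x_d$ shows the optimal translation has no component there, keeping the competing family finite-dimensional so the dichotomy is exhaustive. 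The remaining points are routine for $u\in C_c^\infty$: the uniform integrability needed to pass to the limit in \eqref{gss-id} and in the distance term (the exponential tails of $g/g_\star$ are controlled by the super-Gaussian concentration of $\nu_d$ once $d\ge 2N$), the Taylor expansion in $2^*-2$, and the density step. The same reduction can alternatively be carried out on $\mathbb S^d$ through the inverse stereographic projection, with $g_\star$ replaced by a fixed non-constant optimiser $G_{b,c}$.
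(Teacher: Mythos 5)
Your proposal follows the route of \cite{DEFFL} that is sketched in Section~\ref{Sec:stabilitylogSob} of this paper: apply Theorem~\ref{main} to $f_d=g_\star v_d$ on $\R^d$ and pass to the limit $d\to\infty$. The asymptotics are right (both $S_d/d$ and $d\,I_d^{2/d}$ tend to $\pi e/2$), and the bookkeeping does land on the constant $\beta\pi/2$, \emph{provided} the $\liminf$ inequality you isolate --- that the rescaled distance from $f_d$ to $\mathcal M$ dominates, in the limit, the Gaussian $\mathrm L^2$-distance from $u$ to the exponentials --- actually holds. That inequality is exactly the step the paper flags as the main difficulty of this route, and your outline does not close it: the Riesz-rearrangement reduction of the translation parameter to the first $N$ coordinates is not obvious (the map $b\mapsto g_{a,b,c}/g_\star$ is not a translation of a fixed profile), and the good/escape dichotomy does not clearly rule out intermediate scalings of $(a_d,b_d,c_d)$ for which $g_d/g_\star$ retains mass in $\mathrm L^2(g_\star^{2^*}dx)$ without tending either to an exponential or to zero. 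The paper's new proof in Section~\ref{sec:logsob} is designed precisely to avoid this. Instead of taking the high-dimensional limit of the \emph{global} Theorem~\ref{main}, it takes the limit of the \emph{local} Theorem~\ref{thm:unifboundclose}: its hypotheses are the orthogonality conditions $\int r\,d\mu_d=\int x_j\,r\,d\mu_d=0$, which pass to the Gaussian limit essentially for free, yielding the local Gaussian estimate of Theorem~\ref{unifboundcloselogsob}. The global-to-local reduction (Carlen--Loss competing symmetries with the Gaussian rearrangement $U$ and the conjugated Euclidean rearrangement $V$, a continuous rearrangement flow, and the concavity argument for sign-changing $u$) is then carried out directly on $\R^N$ \emph{after} the limit, where one never has to identify the closest Aubin--Talenti function in high dimension. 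In short, your route is the original one and does yield the theorem, but its hard step is the one the paper's new proof eliminates by reversing the order of ``pass to the limit'' and ``reduce global to local''.
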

\noindent In \cite{DEFFL} this result was obtained as a corollary of Theorem \ref{main}. In this paper we give a new, direct proof which highlights the strategy of \cite{DEFFL} in a slightly simpler setting.

\smallskip In the next sections, we briefly describe the strategies of \cite{MR3177759} and \cite{DEFFL} to prove the improvements in the case of subcritical inequalities (Section \ref{Sec:improv}) and the stability results of Theorem \ref{main} (Section \ref{Sec:stabilitySob}) and Theorem~\ref{logsob} (Section~\ref{Sec:stabilitylogSob}). The new proof of Theorem~\ref{logsob} is given in Section~\ref{sec:logsob}.

\section{Improved interpolation inequalities}\label{Sec:improv}

In this section we describe the method used in \cite{MR3177759} to prove \eqref{improved-sub} and similar improvements of the \eqref{GNS} inequalities in the subcritical case $p\in(2,2^*)$. Inequalities \eqref{GNS} and their limiting case corresponding to $p=2$ can be written as:
\[
\i\ge d\,\e\quad\mbox{where}\quad\i:=\nrmS{\nabla u}2^2\quad\mbox{and}\quad\e:=\left\{\begin{array}{l}
\frac1{p-2}\(\nrmS up^2-\nrmS u2^2\)\quad\mbox{if}\quad p\neq2\,,\\[4pt]
\frac12\,\isd{|u|^2\,\ln\Big(\frac{|u|^2}{\nrmS u2^2}\Big)}\quad\mbox{if}\quad p=2\,,\\
\end{array}\right.
\]
for any $u\in \mathrm H^1(\mathbb S^d)$. By homogeneity we can assume that $\nrmS u2=1$. Let $I_p:=[0,+\infty)$ if $p\ge2$ and $I_p:=[0,1/(2-p))$ if $p\in(1,2)$.
\begin{theorem} [{\cite[Theorem~1.1]{MR3177759}}]\label{Thm:Main} Let $d\ge3$ and $p\in(2,2^*)$. With the above notation and conventions, there is an explicit function $\varphi$, such that $\varphi(0)=0$, $\varphi'(0)=1$, and $\varphi''>0$ on $I_p$, for which
\be{Phi}
\i\ge d\,\varphi(\e)\,.
\ee
\end{theorem}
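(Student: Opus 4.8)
The plan is to run the \emph{carr\'e du champ} (Bakry--\'Emery) method along a suitable nonlinear diffusion flow and to keep track of the nonnegative remainder terms that the method naturally produces. Since everything is scale-invariant, I normalize $\nrmS u2=1$; writing $u=v^\beta$ for an exponent $\beta=\beta(p,d)$ to be chosen (the standard choice making the fast-diffusion flow act nicely on $\mathbb S^d$), I would introduce the flow $\partial_t v = v^{2-2\beta}\big(\Delta v + (\beta-1)\,|\nabla v|^2/v\big)$, so that the $\mathrm L^2(\mathbb S^d)$-mass of $u$ is conserved. Along this flow one monitors the functional $\mathcal F(t) := \i[u(t)] - d\,\e[u(t)]$: the key computation, which is the technical heart of \cite{MR772092, Bakry-Emery85, 1302, MR3177759}, shows that $\tfrac{d}{dt}\mathcal F(t) \le 0$ and, more precisely, that $-\tfrac{d}{dt}\mathcal F(t)$ dominates a nonnegative integral quantity $\mathcal R[u(t)]$ measuring how far $u(t)$ is from a constant (this is where the hypothesis $p<2^*$ enters, guaranteeing the positivity of the relevant quadratic-form coefficient, a positivity that fails at $p=2^*$ and even more dramatically beyond $2^\#$ for the \emph{linear} flow).

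The next step is to convert this differential inequality into the closed form \eqref{Phi}. Because $\e[u(t)]\to0$ and $\i[u(t)]\to0$ as $t\to+\infty$ (the flow drives $u$ to the constant $1$), integrating $\tfrac{d}{dt}\mathcal F\le0$ from $0$ to $+\infty$ already gives $\i\ge d\,\e$; but to get the \emph{strict} convexity improvement I would instead derive a differential inequality directly relating $\i(t):=\i[u(t)]$ and $\e(t):=\e[u(t)]$. The entropy $\e(t)$ satisfies $\tfrac{d}{dt}\e(t) = -\,\big(\text{Fisher-type quantity}\big)$, and the remainder $\mathcal R$ can be bounded below by an explicit expression in $\i(t)$ and $\e(t)$ alone (using, e.g., that on $\mathbb S^d$ the spectral gap controls $\i$ in terms of the entropy production); feeding this back yields an ODE differential inequality of the form $\i'(t) \le \psi\big(\i(t),\e(t)\big)\,\e'(t)$ which, together with the endpoint behavior $\big(\i(t),\e(t)\big)\to(0,0)$ and the linearized relation $\i\sim d\,\e$ near $0$, integrates to $\i(0)\ge d\,\varphi(\e(0))$ for an explicitly constructed $\varphi$ with $\varphi(0)=0$, $\varphi'(0)=1$, $\varphi''>0$ on $I_p$. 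The range $I_p=[0,1/(2-p))$ for $p<2$ appears precisely because, in that regime, the admissible values of the entropy are constrained by the pointwise bound coming from $\nrmS u2=1$ and the sign of $p-2$.

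In detail I would proceed as follows. \textbf{Step 1:} fix the change of variables $u=v^\beta$ and the flow; check mass conservation and short-time well-posedness / regularity (smoothing for the fast-diffusion equation on the compact manifold $\mathbb S^d$), so that all integrations by parts below are justified; by a density/approximation argument it suffices to prove \eqref{Phi} for smooth positive $u$. \textbf{Step 2:} compute $\tfrac{d}{dt}\i(t)$ and $\tfrac{d}{dt}\e(t)$; after integration by parts and use of the Bochner--Lichnerowicz formula on $\mathbb S^d$ (with its Ricci curvature $=(d-1)\,\mathrm{Id}$), collect the result as $-\tfrac{d}{dt}\big(\i(t)-d\,\e(t)\big) = \iS{\big(\text{sum of squares}\big)} + (\text{coefficient})\cdot\iS{\big(\text{nonnegative term}\big)}$, and verify that the coefficient is $>0$ exactly when $p<2^*$. \textbf{Step 3:} extract from this identity the scalar differential inequality between $\i(t)$ and $\e(t)$, define $\varphi$ as the solution of the corresponding ODE with $\varphi(0)=0$, $\varphi'(0)=1$, and integrate. \textbf{Step 4:} verify $\varphi''>0$ on $I_p$ from the explicit formula and record the endpoint asymptotics needed to pass to the limit $t\to\infty$.

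The main obstacle is \textbf{Step 2}: organizing the second-derivative computation so that the curvature contributions, the Hessian-squared term $\iS{\big|\,\mathrm{Hess}\,v - \tfrac1d(\Delta v)\,\mathrm{Id}\,\big|^2}$, and the nonlinear corrections of order $|\nabla v|^4/v^2$ all combine into a manifestly nonnegative remainder with an explicit, sharp coefficient — and in particular tracking how that coefficient degenerates as $p\uparrow 2^*$. A secondary difficulty is the rigorous justification of the flow's regularity and of the endpoint limits $\i(t),\e(t)\to 0$ with the correct rates, which is needed to make the integration in Step 3 produce a clean closed-form $\varphi$ rather than merely an asymptotic statement; this is handled by parabolic smoothing estimates plus the exponential convergence to equilibrium guaranteed by the inequality $\i\ge d\,\e$ itself.
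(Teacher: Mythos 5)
Your overall strategy coincides with the one in \cite{MR3177759} and with what the paper sketches: run a nonlinear \emph{carr\'e du champ} flow, exploit that the dissipation dominates a positive remainder when $p<2^*$, and close the argument by showing that $\i-d\,\varphi(\e)$ is monotone along the flow and tends to $0$. But you are missing a structural reduction that the paper treats as the ``first step'' and that is what makes the explicit $\varphi$ emerge: the Schwarz foliated symmetrization, which reduces the problem to functions of a single coordinate $z=\omega_{d+1}$ and replaces the Laplace--Beltrami operator by the ultraspherical operator $\mathcal L f=(1-z^2)f''-d\,z\,f'$. On the symmetrized (one-dimensional) problem the carr\'e du champ computation produces a remainder that is a pure $|f'|^4$-type term with a single explicit coefficient $\gamma(\beta)$, and this is what allows one to write a genuine scalar ODE in $(\i,\e)$; on the full sphere the remainder mixes the traceless Hessian term you mention with several $|\nabla v|^4/v^2$-type corrections whose coefficients do not isolate as cleanly, and it is not at all obvious how to bound it from below by a function of $\i,\e$ alone with a sharp constant. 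Your Step~3 is precisely where this is glossed over: you assert that the remainder ``can be bounded below by an explicit expression in $\i(t)$ and $\e(t)$ alone (using $\ldots$ the spectral gap)'', but no such elementary bound is offered, and this is the actual technical heart. The paper instead records the precise outcome of the (admittedly lengthy) computation: along the flow
\[
\frac d{dt}\big(\i-d\,\varphi(\e)\big)\le\frac{\gamma}{\beta^2}\,\frac{\i-d\,\varphi(\e)}{\big(1-(p-2)\,\e\big)^\delta}\,\frac{d\e}{dt}\,,
\]
and $\varphi$ is \emph{defined} as the solution of the first-order ODE
\[
\varphi'(s)=1+\frac{\gamma}{\beta^2}\,\frac{\varphi(s)}{\big(1-(p-2)\,s\big)^\delta}\,,\qquad\varphi(0)=0\,,
\]
from which $\varphi(0)=0$, $\varphi'(0)=1$, $\varphi''>0$ are immediate. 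Without naming this ODE (or an equivalent closed differential inequality), your proposal does not actually produce an explicit $\varphi$. Two further small points: the flow in the paper conserves $\nrmS fp$, not $\nrmS u2$, because one sets $\rho=|f|^p$ and evolves $\partial_t\rho=\mathcal L\rho^m$; and the gradient coefficient in the evolution of $w=f^\beta$ is $\kappa=\beta\,(p-2)+1$, not $\beta-1$. These are not fatal in a sketch, but they indicate that the choice of exponent $\beta$ (hence of $m$) has to be made in the admissible window $m_-(d,p)\le m\le m_+(d,p)$, the positivity $\gamma(\beta)>0$ in the interior of this window being exactly what gives the improvement, and both the window and the sign of $\gamma$ are stated in the symmetrized (ultraspherical) framework.
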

\noindent Similar results can be proved in dimension $d=1$ and $d=2$: see~\cite{MR2381156,MR3177759,Dolbeault_2020,Brigati_2023}. Since
\[
\i-d\,\e\ge d\,\big(\varphi(\e)-\e\big)\ge0\,,
\]
it is clear that the equality case in \eqref{improved-sub} holds if and only if $\e=0$, that is, if $u$ is a constant. Moreover $\varphi(\e)-\e$ measures a distance to the constants, for instance in $\mathrm L^1(\mathbb S^d)$ norm using a generalized Cisz\'ar-Kullback-Pinsker inequality, and $\varphi(s)-s\sim\varphi''(0)\,s^2/2$ as $s\to0$. To prove \eqref{improved-sub} in the stronger homogeneous Sobolev norm $\dot{\mathrm H}^1(\R^d)$, it is enough to define the convex function
\[
\Psi_{d,p}(s):=s-d\,\varphi^{-1}\(\frac sd\)\,.
\]
It is elementary to verify that $\Psi_{d,p}(0)=\Psi_{d,p}'(0)=0$ and that $\Psi_{d,p}(s)>0$ if $s\neq 0$.

\medskip Let us give a sketch of the proof of Theorem~\ref{Thm:Main} based on the method of \cite{MR3177759}. As a first step, using Schwarz foliated symmetrization (see for instance \cite[Section~2]{MR3177759} for references) and cylindrical coordinates on $\mathbb S^d\subset\R^{d+1}$, we can reduce the problem to functions depending only on one coordinate $z\in[-1,1]$ corresponding to the South Pole -- North Pole axis. In other words, we consider a function $u(\omega)=f(z)$, where $\omega=(\omega_1,\omega_2,\ldots,\omega_{d+1})$ and $z=\omega_{d+1}$, and find that
\[
\nrmS{\nabla u}2^2=\int_{-1}^1 |f'(z)|^2\,\big(1-z^2\big) \, d\sigma_d(z) \quad\mbox{and}\quad\nrmS uq^q=\int_{-1}^1 |f(z)|^q\, d\sigma_d(z) \,,
\]
where $'$ denotes the $z$-derivative and where
\[
d\sigma_d(z):=\frac{\Gamma(\tfrac{d+1}2)}{\sqrt\pi\,\Gamma(\tfrac d2)}\,\big(1-z^2\big)^{\frac d2-1}\,dz\,.
\]
In particular, the Laplace-Beltrami operator on $\mathbb S^d$ is reduced to the {\it ultraspherical} operator: $\Delta u=\mathcal L\,f:=\big(1-z^2\big)\,f''-d\,z\,f'$ on $(-1,1)$.

The key idea is to prove that $\i-d\,\varphi(\e)$ is monotone nonincreasing under the action of
\be{NLDE}
\frac{\partial\rho}{\partial t}=\mathcal L\,\rho^m \,,
\ee
where $m=1$ corresponds to the heat flow, $m>1$ to the porous medium flow and $m<1$ to the fast diffusion flow. Here we choose $\rho=|f|^p$ so that $\nrm up$ is conserved under the action of \eqref{NLDE}. It is convenient to introduce the exponent $\beta$ such that
\[
m=1+\tfrac2p\,\big(\tfrac1\beta-1\big)
\]
and consider the function $f=w^\beta$, such that $w^{\beta\,p}=\rho$, which solves
\be{Eqn:Nonlinear}
\frac{\partial w}{\partial t}=w^{2-2\beta}\(\mathcal L w+\kappa\,\frac{|w'|^2}{w}\)\quad\mbox{with}\quad\kappa:=\beta\,(p-2)+1\,.
\ee
The \emph{carr\'e du champ} method shows that $\frac d{dt}\(\i-d\,\e\)\le0$ if the function
\[
\gamma(\beta):=-\,\big(\tfrac{d-1}{d+2}\,(\kappa+\beta-1)\big)^2+\kappa\,(\beta-1)+\,\tfrac d{d+2}\,(\kappa+\beta-1)
\]
takes nonnegative values, which amounts to $m_-(d,p)\le m\le m_+(d,p)$ where
\[
m_\pm(d,p):=\frac1{(d+2)\,p}\(d\,p+2\pm\sqrt{d\,(d-2)\,(p-1)\,(2^*-p)}\,\)\,.
\]
Using $\lim_{t\to+\infty}\(\i(t)-d\,\e(t)\)=0$, we conclude that $\i(t)-d\,\e(t)\ge0$ for any $t\ge0$ and, as a special case, for $t=0$: this proves \eqref{GNS} for an arbitrary initial datum. If $m_-(d,p)<m<m_+(d,p)$, we find that $\gamma(\beta)>0$, which leaves some space for an improvement. A more detailed and quite lengthy computation along the flow \eqref{Eqn:Nonlinear} yields
\[
\frac d{dt}\big(\mathsf i-\,d\,\varphi(\mathsf e)\big)\le\frac\gamma{\beta^2}\,\frac{\mathsf i-\,d\,\varphi(\mathsf e)}{\big(1-(p-2)\,\mathsf e\big)^\delta}\,\frac{d\e}{dt}
\]
where $\delta=1$ if $1\le p\le2$ and $\delta:=\frac{2-(4-p)\,\beta}{2\,\beta\,(p-2)}$ if $p>2$, if $\varphi$ solves
\[
\frac{d\varphi}{ds}=1+\frac\gamma{\beta^2}\,\frac{\varphi(s)}{\(1\,-\,(p-2)\,s\)^\delta}\,,\quad\varphi(0)=0\,.
\]
This proves \eqref{Phi} for any $t\ge0$ as a consequence of the monotonicity of $\i-d\,\varphi(\e)$ and the fact that $\lim_{t\to+\infty}\big(\i(t)-d\,\varphi(\e(t))\big)=0$. See~\cite[Appendix~B.4]{Brigati_2023} for detailed justifications.\qed

\medskip As a consequence of the \emph{carr\'e du champ} method, optimal constants in Gagliardo-Nirenberg-Sobolev inequalities determine optimal rates of decay in some associated fast diffusion equations. A stability result for a functional inequality can usually be rephrased as an improved inequality, under appropriate constraints. Measured by the dissipation of some entropy functional, this means a faster convergence rate towards an asymptotic profile for solutions of the associated evolution equation: see~\cite{MR3024094} for an application of stability for Gagliardo-Nirenberg-Sobolev inequalities to the proof of a quantitative rate of convergence for solutions of the Keller-Segel system, and~\cite{Brigati_2023} for explicit, improved, convergence rates based on~\eqref{GNS} in the subcritical regime, in the case of the sphere. On the Euclidean space, improved convergence rates based on stability for Gagliardo-Nirenberg-Sobolev inequalities (subcritical and critical cases) and entropy methods are studied in~\cite{BDNS}. Also see~\cite{MR3567822,brigati2024stability} for the limiting case of the logarithmic Sobolev inequality and corresponding improved decay rates for linear heat flows. Notice that stability estimates can also be applied in the perspective of estimating {\it a posteriori} errors both from theoretical (test functions) and numerical (approximate numerical solutions) points of view.

\newpage\section{Stability of the Sobolev inequality: proof of Theorem~\ref{main}}\label{Sec:stabilitySob}

In this section we explain the general ideas of the proof of Theorem~\ref{main} in \cite{DEFFL}.

\subsection{On the stability proof by Bianchi and Egnell}
The strategy of Bianchi-Egnell to prove~\eqref{eq:bianchi-egnell0} is based on two main steps:
\begin{enumerate}
\item A local stability estimate in a neighborhood of $\mathcal M$, obtained by a local spectral analysis.
\item A reduction of the global estimate to the local estimate by the concentration-compactness method based on Lions' analysis (see \cite{MR834360}).
\end{enumerate}
Theorem \ref{main} is a significant improvement of Bianchi-Egnell's result, as it contains a dimensionally sharp lower estimate for the best stability constant.
Our strategy is to make the \it second step constructive \rm and the \it first one explicit, \rm with much more detailed estimates.

\subsection{Strategy of the proof of Theorem \ref{main}}

The proof is divided into several steps:
\begin{enumerate}
\item Local analysis: prove the inequality for {\it nonnegative} functions close to $\mathcal M$ with an explicit remainder term. The analysis is quite involved: it relies on ``cuttings'' at various heights, the use of uniform bounds on spherical harmonics and some delicate concavity properties.
\item Local to global extension: prove the inequality for {\it nonnegative} functions far from $\mathcal M$ using the {\it competing symmetries} of \cite{CarlenLoss} and a continuous Steiner symmetrization.
\item Deduce the inequality for {\it sign-changing} functions from the inequality for {\it nonnegative} functions by a concavity argument.
\end{enumerate}

\medskip\noindent$\bullet$ In the first step, in order to obtain uniform estimates as $d\to+\infty$, we need to expand
\[
(1+r)^{2^*}-1-2^*\,r
\]
with an accurate remainder term, for all $r\ge-\,1$. To do that, we ``cut $r$ into pieces" by defining
\[
r_1:=\min\{r,\gamma\}\,,\quad r_2:=\min\big\{(r-\gamma)_+,M-\gamma\big\}\quad\mbox{and}\quad r_3:=(r-M)_+\,,
\]
where $\gamma$ and $M$ are suitable parameters satisfying $0<\gamma<M.$ Furthermore, define
\[\textstyle
\theta:=2^*-2=\frac4{d-2}\,.
\]
Notice that $\theta\in(0,1]$ if $d\ge6$ and $\lim_{d\to+\infty}\theta(d)=0$.
\begin{lemma}[{\cite[Proposition~2.9]{DEFFL}}] Given $d\ge6$, $r\in [-1,\infty)$, and $\overline M\in[\sqrt e,+\infty)$, we have
\begin{multline*}\label{eq:propinitial}
\hspace*{-12pt}(1+r)^{2^*} - 1 - 2^*\,r\le \tfrac12\,2^*\,(2^*-1)\,(r_1+r_2)^2 + 2\,(r_1+r_2)\,r_3 + \(1+C_M\,\theta\,\overline M^{-1}\ln\overline M\) r_3^{2^*}\\
+ \(\tfrac32\,\gamma\,\theta\,r_1^2 + C_{M,\overline M}\,\theta\,r_2^2\) \1_{\{r\le M\}} + C_{M,\overline M}\,\theta\,M^2\,\1_{\{r>M\}}\,,
\end{multline*}
where all the constants in the above inequality are explicit.
\end{lemma}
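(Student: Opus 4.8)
The plan is to prove the inequality by carefully splitting the function $r \mapsto (1+r)^{2^*} - 1 - 2^* r$ according to the size of $r$, using the decomposition $r = r_1 + r_2 + r_3$ on the ranges $r \le \gamma$, $\gamma < r \le M$, and $r > M$. The elementary but essential observation is that $(1+r)^{2^*}$ is convex, so on each regime the Taylor-type remainder can be bounded by quadratic expressions in the $r_i$ plus genuinely higher-order ($r_3^{2^*}$) terms; the role of $\theta = 2^*-2$, which tends to $0$ as $d \to \infty$, is that the "extra" error beyond the naive bound $\tfrac12 2^*(2^*-1)(r_1+r_2)^2 + 2(r_1+r_2) r_3 + r_3^{2^*}$ should be controllable by $\theta$ times explicit functions of the parameters $\gamma, M, \overline M$.

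First I would treat the region $r \le M$ (so $r_3 = 0$ and $r = r_1 + r_2$). Here I would write $(1+r)^{2^*} = e^{2^* \ln(1+r)}$ for $r > -1$, or more robustly use the integral form of the remainder
\[
(1+r)^{2^*} - 1 - 2^* r = 2^*(2^*-1) \int_0^r (r-s)(1+s)^{2^*-2}\,ds,
\]
and estimate $(1+s)^{2^*-2} = (1+s)^\theta$. On $s \le \gamma$ one has $(1+s)^\theta \le 1 + C\,\theta\,\gamma$ for an explicit $C$ (this is where the $\tfrac32\,\gamma\,\theta\,r_1^2$ term originates, after also accounting for the cross term); on $\gamma < s \le M$ one bounds $(1+s)^\theta \le (1+M)^\theta \le 1 + C_{M}\,\theta$, giving the $C_{M,\overline M}\,\theta\,r_2^2$ contribution. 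Splitting the integral at $s = \gamma$ and handling the mixed region $r_1 > 0$, $r_2 > 0$ separately produces exactly the stated quadratic-plus-$\theta$-correction structure; all constants are tracked explicitly since $\overline M \ge \sqrt e$ ensures $\ln \overline M \ge 1/2$ and keeps the logarithmic factors bounded.

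Next I would handle $r > M$, where $r_1 = \gamma$, $r_2 = M - \gamma$, $r_3 = r - M > 0$. Writing $1 + r = (1+M)(1 + \tfrac{r_3}{1+M})$ and expanding $(1+M)^{2^*}(1 + \tfrac{r_3}{1+M})^{2^*}$, I would isolate the leading term $r_3^{2^*}$ (coming from $(1+r)^{2^*} \approx r_3^{2^*}$ for large $r_3$) and show the discrepancy is at most $\bigl(C_M\,\theta\,\overline M^{-1}\ln\overline M\bigr) r_3^{2^*}$ plus the cross term $2(r_1+r_2) r_3$ plus the leftover $C_{M,\overline M}\,\theta\,M^2$ absorbing the bounded contributions from $1, r$ and the $(1+M)^{2^*}$-type constants. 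The parameter $\overline M$ enters to control $r_3$ in a scale-invariant way (comparing $r_3$ against $\overline M$), so that the logarithmic correction $\overline M^{-1}\ln\overline M$ arises from estimating something like $(1+M)^\theta - 1$ or $\sup_{t}\, t^{-\theta}\ln(1+t)$ over the relevant range.

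The main obstacle is bookkeeping: obtaining a bound that is simultaneously (i) sharp enough at $r = 0$ that the quadratic coefficient is exactly $\tfrac12\,2^*(2^*-1)$ with no spurious $O(1)$ loss, (ii) uniform as $d \to \infty$ with all corrections carrying a factor of $\theta$, and (iii) fully explicit in $\gamma, M, \overline M$. Getting the pure power $r_3^{2^*}$ (rather than $r_3^{2^*}$ times a constant $>1$ that does not vanish with $\theta$) in the large-$r$ regime is the delicate point, since for $d$ close to $6$ the exponent $2^*$ is close to $3$ and the convexity estimates are least favorable; I expect to need the $\overline M \ge \sqrt e$ hypothesis precisely there. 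Once the three regimes are assembled and the indicator functions $\1_{\{r \le M\}}$, $\1_{\{r > M\}}$ are used to glue the estimates, the claimed inequality follows.
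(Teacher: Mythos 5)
Your high-level plan—integral remainder $\,(1+r)^{2^*}-1-2^*r = 2^*(2^*-1)\int_0^r(r-s)(1+s)^\theta\,ds$ on $\{r\le M\}$, a separate analysis isolating $r_3^{2^*}$ on $\{r>M\}$, with $\theta$-tracking throughout—is plausible and likely close in spirit to the proof in [DEFFL, Prop.~2.9]. But what you have written is a roadmap with the two hardest points left unresolved, so I would not count it as a proof.

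On $\{r\le M\}$, the estimate you actually invoke, $(1+s)^\theta\le 1+C\theta\gamma$ uniformly on $[0,\gamma]$, is too crude: plugging it in gives a coefficient $\tfrac12\,2^*(2^*-1)\,\theta\gamma$ on $r_1^2$, which at $d=6$ (where $\theta=1$, $2^*=3$) equals $3\theta\gamma$, exceeding the stated $\tfrac32\,\gamma\,\theta$. To land on $\tfrac32$ (or better) one must keep the integral structure, use $(1+s)^\theta-1\le\theta s$, compute $\int_0^r(r-s)s\,ds = r^3/6$, and only then distribute $r^3=(\gamma+(r-\gamma))^3$ between $\gamma^3$ and $(r-\gamma)^2$ via AM--GM; this is where $\tfrac32$ actually emerges. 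You gesture at this but do not carry it through. More seriously, on $\{r>M\}$ the crucial claim—that the excess of $(1+M+r_3)^{2^*}$ over $r_3^{2^*}$ (after subtracting $\tfrac12 2^*(2^*-1)M^2+2Mr_3$) is $\le C_M\,\theta\,\overline M^{-1}\ln\overline M\cdot r_3^{2^*}+C_{M,\overline M}\,\theta\,M^2$—is asserted, not shown. Your two candidate sources of the factor $\overline M^{-1}\ln\overline M$ do not in fact produce it: $(1+M)^\theta-1\approx\theta\ln(1+M)$ has no $\overline M$ in it, and $\sup_t t^{-\theta}\ln(1+t)\approx(e\theta)^{-1}$ blows up as $\theta\to0$. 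What is actually needed is a secondary splitting of $\{r>M\}$ at $r_3=\overline M$: for $r_3\ge\overline M$ one controls $(1+(1+M)/r_3)^{2^*}-1\lesssim(1+M)/\overline M$, while for $M<r_3<\overline M$ one bounds $r_3^\theta-1\le\theta\,r_3^\theta\ln r_3\le\theta\,\overline M^\theta\ln\overline M$ and absorbs the resulting $M$-dependent quantity into $C_{M,\overline M}\,\theta\,M^2$ (note that $C_{M,\overline M}$ is allowed to grow with both $M$ and $\overline M$). Until this second split is made and the balance quantified, the appearance of the factor $\overline M^{-1}\ln\overline M$—which is the whole point of introducing $\overline M$—remains unexplained.
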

One can then prove that there exist computable constants $\epsilon_1$, $\epsilon_2$, $k_0$, and $\epsilon_0\in(0,1/\theta)$, such that
\[
\nrmS{\nabla r}2^2+\bfA\,\nrmS r2^2-\bfA\,\nrmS{1+r}{2^*}^2\ge\frac{4\,\epsilon_0}{d-2}\(\nrmS{\nabla r}2^2+\bfA\,\nrmS r2^2\)+\sum_{k=1}^3 I_k\,,
\]
with $\bfA:= \tfrac14\,d\,(d-2)$ and
\[\begin{aligned}
I_1 &\textstyle := (1-\theta\,\epsilon_0) \isd{\(|\nabla r_1|^2 + \bfA\,r_1^2\)} - \bfA\,(2^*-1 + \epsilon_1\,\theta) \isd{r_1^2} + \bfA\,k_0\,\theta \isd{(r_2^2+r_3^2)} \,,\\
I_2 &\textstyle := (1-\theta\,\epsilon_0) \isd{\(|\nabla r_2|^2 + \bfA\,r_2^2\)} - \bfA\,\big(2^*-1 + (k_0 + C_{\epsilon_1,\epsilon_2})\,\theta\big) \isd{r_2^2} \,, \\
I_3 &\textstyle := (1-\theta\,\epsilon_0) \isd{\(|\nabla r_3|^2 + \bfA\,r_3^2\)} - \tfrac 2{2^*}\,\bfA\,(1+\epsilon_2\,\theta) \isd{r_3^{2^*}} - \bfA\,k_0\,\theta \isd{r_3^2}\,.
\end{aligned}\]
Next, one can use spectral gap estimates to prove $I_1\ge 0$ and the Sobolev inequality to prove
$I_3\ge 0$, noting that the extra coefficient $2/2^*<1$ gives enough room to accommodate all error terms.
Finally, using that $\mu\big(\{ r_2 >0\}\big)$ is small, an improved spectral gap inequality allows us to show
$I_2\ge 0$.

If $d=3$, $4$, or $5$, we can rely on a simpler Taylor expansion that can be found in~\cite[Proposition~2.7]{DEFFL}. As a consequence, the following result has been proved.
\begin{theorem}[{\cite[Theorem~2.1]{DEFFL}}]\label{thm:unifboundclose} There are explicit constants $\epsilon_0\in(0,1/3)$ and $\delta\in(0,1/2)$ such that for all $d\ge 3$ and for all nonnegative $u=1+r\in \mathrm H^1(\mathbb S^d)$ with
\[
\nrmS r{2^*}^2\le\frac\delta{1-\delta}\,,\quad\isd r = 0\quad\mbox{and}\quad\isd{\omega\,r}=0\,,
\]
one has
\[
\nrmS{\nabla u}2^2+\bfA\,\nrmS u2^2-\bfA\,\nrmS u{2^*}^2\ge \frac{4\,\epsilon_0}{d-2}\(\nrmS{\nabla r}2^2+\bfA\,\nrmS r2^2\)\,.
\]
\end{theorem}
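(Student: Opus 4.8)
The plan is to deduce the estimate from the explicit Taylor expansion of the previous Lemma together with three separate positivity statements for the pieces $r_1,r_2,r_3$, treating $d\ge6$ (where $\theta=\tfrac4{d-2}\in(0,1]$ and the Lemma applies) and the cases $d\in\{3,4,5\}$ separately, the latter by the same scheme based on the cruder expansion \cite[Proposition~2.7]{DEFFL} — where uniformity in $d$ is a non-issue. Throughout I normalise $\nrmS u2$ through homogeneity and write $u=1+r$; the constraints $\isd r=0$ and $\isd{\omega\,r}=0$ say exactly that $r$ is orthogonal to the constants and to the first spherical harmonics.

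The first move is an exact algebraic reduction. From $\isd r=0$ one gets $\nrmS{\nabla u}2^2=\nrmS{\nabla r}2^2$, $\bfA\,\nrmS u2^2=\bfA+\bfA\,\nrmS r2^2$, and $\nrmS u{2^*}^{2^*}=1+\isd{\big((1+r)^{2^*}-1-2^*r\big)}$ (here $u\ge0$ is used). Since $2/2^*\le1$, concavity of $t\mapsto t^{2/2^*}$ gives $\nrmS u{2^*}^2\le1+\tfrac2{2^*}\isd{\big((1+r)^{2^*}-1-2^*r\big)}$, so the left-hand side of the theorem is bounded below by $\nrmS{\nabla r}2^2+\bfA\,\nrmS r2^2-\tfrac2{2^*}\,\bfA\isd{\big((1+r)^{2^*}-1-2^*r\big)}$. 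Into the integral I would insert the pointwise bound of the Lemma, with the cutting heights $\gamma<M$ and the auxiliary parameter $\overline M$ chosen as explicit quantities bounded uniformly in $d$, then split $r=r_1+r_2+r_3$ and $\nrmS{\nabla r}2^2=\sum_i\nrmS{\nabla r_i}2^2$ (the gradients live on disjoint level sets) and collect terms into the three packages. This reproduces the decomposition
\[
\nrmS{\nabla r}2^2+\bfA\,\nrmS r2^2-\bfA\,\nrmS{1+r}{2^*}^2\ \ge\ \frac{4\,\epsilon_0}{d-2}\big(\nrmS{\nabla r}2^2+\bfA\,\nrmS r2^2\big)+I_1+I_2+I_3
\]
stated just before the theorem, and the remaining task is to choose $\epsilon_0,\epsilon_1,\epsilon_2,k_0$ (and $\gamma,M,\overline M,\delta$) so that $I_1,I_2,I_3\ge0$.

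For $I_1$: since $r_1=r-(r_2+r_3)$ and $r_2+r_3$ is supported on $\{r>\gamma\}$, whose measure is $\le\gamma^{-2^*}\nrmS r{2^*}^{2^*}$ by Chebyshev — hence small under the hypothesis $\nrmS r{2^*}^2\le\delta/(1-\delta)$ — the function $r_1$ is orthogonal to the constants and to the first spherical harmonics up to an error whose $\mathrm L^2$ size is controlled by that of $r_2+r_3$ (here uniform bounds on the spherical harmonics enter). The gap $\nrmS{\nabla v}2^2\ge2(d+1)\,\nrmS v2^2$ valid on the orthogonal complement of $\{1\}\oplus\langle\omega_1,\dots,\omega_{d+1}\rangle$ then yields $\nrmS{\nabla r_1}2^2+\bfA\,\nrmS{r_1}2^2\ge\big(\bfA(2^*-1)+d+2\big)\nrmS{r_1}2^2-(\text{small})$, the surplus $d+2$ being what absorbs the $\epsilon_1\theta$-loss and the $(1-\theta\epsilon_0)$-loss, while the remaining $\bfA k_0\theta\isd{(r_2^2+r_3^2)}\ge0$ only helps. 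For $I_3$: the Sobolev inequality \eqref{Sob-sphere}, $\nrmS{\nabla r_3}2^2+\bfA\,\nrmS{r_3}2^2\ge\bfA\,\nrmS{r_3}{2^*}^2$, is the input; since $r_3=(r-M)_+$ is supported on a set of tiny measure, both $\nrmS{r_3}{2^*}^{2^*}$ and $\nrmS{r_3}2^2$ are $\ll\nrmS{r_3}{2^*}^2$, and the explicit prefactor $\tfrac2{2^*}<1$ in front of $\bfA\,\nrmS{r_3}{2^*}^{2^*}$ leaves a fixed proportion of the Sobolev term free to cover every error term, so $I_3\ge0$. For $I_2$: $r_2=\min\{(r-\gamma)_+,M-\gamma\}$ is \emph{not} orthogonal to the constants, so in place of the plain spectral gap I would invoke an improved, Faber--Krahn-type inequality on $\mathbb S^d$: if $\mu(\{r>\gamma\})\le\eta$ then $\nrmS{\nabla r_2}2^2\ge\Lambda(\eta)\,\nrmS{r_2}2^2$ with an explicit $\Lambda(\eta)\to+\infty$ as $\eta\to0$; taking $\delta$, hence $\eta$, small makes $(1-\theta\epsilon_0)\big(\Lambda(\eta)+\bfA\big)\ge\bfA\big(2^*-1+(k_0+C_{\epsilon_1,\epsilon_2})\theta\big)$, i.e. $I_2\ge0$.

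The delicate point — and why the analysis is ``quite involved'' — is not any single one of these three steps but the bookkeeping that makes them compatible: one must pick $\gamma,M,\overline M$ and $\epsilon_0,\epsilon_1,\epsilon_2,k_0$ so that $I_1,I_2,I_3\ge0$ \emph{simultaneously} with constants that do not degrade as $d\to\infty$, balancing the $O(1/d)$ size of the $\theta$-corrections against the $(1-\theta\epsilon_0)$ factor and the $\bfA\sim d^2/4$-weighted terms, and one needs the quantitative, dimension-uniform improved spectral gap $\Lambda(\eta)$ feeding $I_2$ together with the dimension-uniform control of the approximate-orthogonality error feeding $I_1$. By contrast, $I_3$ is essentially immediate from Sobolev with built-in slack, and the low-dimensional cases only require rerunning the argument with \cite[Proposition~2.7]{DEFFL} in place of the Lemma.
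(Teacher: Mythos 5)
Your proposal is correct and follows essentially the same route as the paper's sketch: the algebraic reduction via $\isd r=0$ and concavity of $t\mapsto t^{2/2^*}$, insertion of the pointwise Taylor expansion from the cutting lemma, the split $r=r_1+r_2+r_3$ with disjointly-supported gradients, the spectral gap $2(d+1)$ (with approximate orthogonality for $r_1$), the Sobolev inequality with the $2/2^*<1$ slack for $r_3$, the small-support improved spectral gap for $r_2$, and the separate low-dimensional cases via \cite[Proposition~2.7]{DEFFL}. You have also correctly identified that the genuine difficulty is the dimension-uniform bookkeeping of the parameters $\gamma,M,\overline M,\epsilon_0,\epsilon_1,\epsilon_2,k_0,\delta$ rather than any single step.
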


Let us define the stability quotient
\[
\mathcal E(f) :=\frac{ \Vert \nabla f \Vert^2_{\mathrm L^2(\R^d)} - S_d\,\Vert f \Vert^2_{\mathrm L^{2^*}(\R^d)}} {\inf_{\bb\in\mathcal M} \Vert \nabla f - \nabla\bb \Vert^2_{\mathrm L^2(\R^d)}}
\]
and consider the infimum
\[
\Imu(\delta) := \inf\Big\{ \mathcal E(f) \,:\,0\le f\in\dot{\mathrm H}^1(\R^d)\setminus\mathcal M\,,\quad\inf_{\bb\in\mathcal M} \Vert \nabla f - \nabla\bb\Vert^2_{\mathrm L^2(\R^d)} \le \delta\,\Vert \nabla f \Vert^2_{\mathrm L^2(\R^d)} \Big\}\,.
\]
For a given $f$, up to a conformal transformation, we can assume that $\inf_{\bb\in\mathcal M} \Vert \nabla f - \nabla\bb\Vert^2_{\mathrm L^2(\R^d)}$ is realized by the Aubin-Talenti function $g=\bb_\star$ with
\be{gstar}
\bb_\star(x):=|\Sp^d|^{-\frac{d-2}{2\,d}} \left( \frac2{1+|x|^2}\right)^\frac{d-2}2\quad\forall\,x\in\R^d\,,
\ee
use the inverse stereographic projection to transform $f$ and $g$ respectively into $u=1+r$ and $1$, and notice that
\begin{align*}
&\bfA\,\nrmS r{2^*}^2\le\nrmS{\nabla r}2^2+\bfA\,\nrmS r2^2=\inf_{\bb\in\mathcal M} \Vert \nabla f - \nabla\bb\Vert^2_{\mathrm L^2(\R^d)}\,,\\
&\Vert \nabla f \Vert^2_{\mathrm L^2(\R^d)}=\nrmS{\nabla u}2^2+\bfA\,\nrmS u2^2=\nrmS{\nabla r}2^2+\bfA\,\nrmS r2^2+\bfA\,,
\end{align*}
where the first line follows from~\eqref{Sob-sphere}. We deduce from the condition $\inf_{\bb\in\mathcal M} \Vert \nabla f - \nabla\bb\Vert^2_{\mathrm L^2(\R^d)} \le \delta\,\Vert \nabla f \Vert^2_{\mathrm L^2(\R^d)}$ that
\[
\bfA\,\nrmS r{2^*}^2\le\nrmS{\nabla r}2^2+\bfA\,\nrmS r2^2\le\frac{\delta\,\bfA}{1-\delta}
\]
and apply Theorem \ref{thm:unifboundclose} to obtain
\[
\mathcal E(f) = \frac{\nrmS{\nabla u}2^2+\bfA\,\nrmS u2^2-\bfA\,\nrmS u{2^*}^2}{\nrmS{\nabla r}2^2+\bfA\,\nrmS r2^2}\ge\frac{4\,\epsilon_0}{d-2}\,.
\]
This completes the \it local \rm analysis where, by homogeneity, the scale is fixed in terms of $\Vert \nabla f \Vert_{\mathrm L^2(\R^d)}^2$. With the notation of Theorem \ref{thm:unifboundclose}, 
\[
\Imu(\delta)\ge\frac{4\,\epsilon_0}{d-2}\,.
\]

\medskip\noindent$\bullet$ In Step 2, we deal with nonnegative functions $f$ that are not close to the manifold $\mathcal M$, \it i.e.\rm, such~that
\be{farway}
\inf_{\bb\in\mathcal M} \Vert \nabla f-\nabla\bb \Vert_{\mathrm L^2(\R^d)}^2 > \delta\,\Vert \nabla f \Vert_{\mathrm L^2(\R^d)}^2\,.
\ee
The first ingredient is the method of \it competing symmetries \rm \cite{CarlenLoss} of Carlen and Loss. Consider any nonnegative function $f\in\dot{\mathrm H}^1(\R^d)$ and let
\[
(Uf)(x) := \(\frac2{|x-e_d|^2}\right)^{\frac{d-2}2} f\left(\frac{x_1}{|x-e_d|^2}, \ldots, \frac{x_{d-1}}{|x-e_d|^2}, \frac{|x|^2-1}{|x-e_d|^2}\)\quad\mbox{where}\quad e_d=(0,\ldots, 0, 1)\in\R^d\,,
\]
and notice that $\mathcal E(Uf) = \mathcal E(f)$. We also consider the symmetric decreasing rearrangement $\mathcal R f=f^*$, with the properties that $f$ and $f^*$ are equimeasurable, and that $\Vert \nabla f^* \Vert_{\mathrm L^2(\R^d)} \le \Vert \nabla f\Vert_{\mathrm L^2(\R^d)}$. The following result is taken from the proof of \cite[Theorem~3.3]{CarlenLoss}.
\begin{theorem}[{\cite{CarlenLoss}}]\label{competingsymmetries} Let $f \in \mathrm L^{2^*}(\R^d)$ be a nonnegative function with \hbox{$\Vert f \Vert_{\mathrm L^{2^*}(\R^d)}=1$}. The sequence $f_n = (\mathcal RU)^n f$ is such that $\lim_{n\to+\infty} \Vert f_n - \bb_*\Vert_{\mathrm L^{2^*}(\R^d)} = 0$. If $f\in\Hdot$, then $(\Vert \nabla f_n \Vert_{\mathrm L^2(\R^d)})_{n\in\N}$ is a nonincreasing sequence. \end{theorem}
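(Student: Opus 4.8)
The plan is to run the \emph{competing symmetries} scheme. I first record the properties of the two maps. The rearrangement $\mathcal R$ is equimeasurable (so $\|f^*\|_{\mathrm L^{2^*}(\R^d)}=\|f\|_{\mathrm L^{2^*}(\R^d)}$), it does not increase the Dirichlet seminorm (P\'olya--Szeg\H{o}), and it obeys the Hardy--Littlewood inequality $\int_{\R^d} f\,w\,dx\le\int_{\R^d} f^*\,w\,dx$ whenever $w\ge 0$ is symmetric decreasing, with equality, if $w$ is moreover strictly radially decreasing and positive, only when $f=f^*$. The map $U$ is the Kelvin-type transform attached to a conformal transformation $\psi$ of $\R^d\cup\{\infty\}$ (with $\psi(0)=-e_d$ and $\psi(\infty)=e_d$), tuned to the exponent $2^*$; under the inverse stereographic projection it becomes precomposition with a rotation $\rho$ of $\mathbb S^d$ that does not fix the pole, so that $U$ preserves both $\|\nabla\cdot\|_{\mathrm L^2(\R^d)}$ and $\|\cdot\|_{\mathrm L^{2^*}(\R^d)}$ (this is implicit in $\mathcal E(Uf)=\mathcal E(f)$), and $U\bb_*=\bb_*$ since a constant function on the sphere is rotation invariant. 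The second claim of the theorem is now immediate: writing $f_n=\mathcal R(Uf_{n-1})$ for $n\ge 1$,
\[
\|\nabla f_{n}\|_{\mathrm L^2(\R^d)}\le\|\nabla Uf_{n-1}\|_{\mathrm L^2(\R^d)}=\|\nabla f_{n-1}\|_{\mathrm L^2(\R^d)}\,.
\]

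\emph{A monotone functional and the fixed points.} Set $\ell(f):=\int_{\R^d} f\,\bb_*^{2^*-1}\,dx$. The density $\bb_*^{2^*-1}$ is a multiple of $(1+|x|^2)^{-(d+2)/2}$, hence strictly radially decreasing and positive, so Hardy--Littlewood gives $\ell(\mathcal R g)\ge\ell(g)$; and $\ell(Uf)=\ell(f)$, because transported to the sphere $\ell$ becomes a multiple of $\int_{\mathbb S^d}(\cdot)\,d\mu_d$, which is invariant under $\rho$. Therefore $\ell(f_{n+1})=\ell(\mathcal R Uf_n)\ge\ell(Uf_n)=\ell(f_n)$, so $(\ell(f_n))_n$ is non-decreasing, and it is bounded above by $\|\bb_*^{2^*-1}\|_{\mathrm L^{(2^*)'}(\R^d)}\,\|f_n\|_{\mathrm L^{2^*}(\R^d)}=\|\bb_*\|_{\mathrm L^{2^*}(\R^d)}^{2^*-1}$; thus $\ell(f_n)\uparrow L$ for some finite $L$. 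If now $h\ge 0$ with $\|h\|_{\mathrm L^{2^*}(\R^d)}=1$ satisfies $h=\mathcal R Uh$, then $\ell(\mathcal R Uh)=\ell(Uh)$, and the equality case of Hardy--Littlewood forces $Uh=(Uh)^*$; since $h$ is itself symmetric decreasing, $h=\mathcal R Uh=(Uh)^*=Uh$, i.e.\ $h$ is $U$-invariant and symmetric decreasing. In the sphere picture this means $h$ corresponds to a function which is symmetric (in the transported, rearranged sense) about the pole $P$ and, being $\rho$-invariant, also about $\rho P\ne P$; a rigidity argument (symmetry about two distinct centers forces constancy) shows $h=\bb_*$. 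Since $\mathcal R U$ is positively homogeneous, its fixed points form the ray $\{\lambda\,\bb_*:\lambda\ge 0\}$.

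\emph{Convergence of the iteration.} Every $f_n$ with $n\ge 1$ is symmetric decreasing with $\|f_n\|_{\mathrm L^{2^*}(\R^d)}=1$, hence $0\le f_n(x)\le C_d\,|x|^{-(d-2)/2}$ uniformly in $n$; by Helly's selection theorem any subsequence of $(f_n)$ has a further subsequence converging a.e.\ to some symmetric decreasing $g$. Since $\bb_*^{2^*-1}$ decays fast enough to dominate, along that subsequence $\ell(f_n)\to\ell(g)$, so $\ell(g)=L$; and $\ell(f_{n+1})-\ell(f_n)\to 0$ means the Hardy--Littlewood defect $\int_{\R^d}[(Uf_n)^*-Uf_n]\,\bb_*^{2^*-1}\,dx$ tends to $0$, which in the limit (using continuity of $U$ together with continuity of rearrangement under the relevant convergence, most cleanly checked on the compact sphere) forces $Ug=(Ug)^*$ and hence $g=\mathcal R Ug=Ug$. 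Thus $g=\lambda\,\bb_*$ for some $\lambda\ge 0$ by the previous paragraph, and then $\lambda=\ell(g)/\ell(\bb_*)=L$. Consequently \emph{every} a.e.\ limit point of $(f_n)$ equals $L\,\bb_*$, so $f_n\to L\,\bb_*$ a.e. Finally $L=1$: if not, the Brezis--Lieb lemma shows that the residuals $f_n-L\,\bb_*$ carry a fixed positive amount of $\mathrm L^{2^*}$-mass while tending to $0$ a.e., i.e.\ a bubble escaping at scales tending to $0$ or $\infty$; but $U$ shifts the scale of a concentrating (resp.\ spreading) profile by a definite factor bounded away from $1$ (the linear conformal factor of $\psi$ at $0$ equals $2$, and near $\infty$ it is $<1$) while $\mathcal R$ leaves scales unchanged, so the escaping scale of $f_{n+1}$ could not match that of $f_n$ in the limit --- a contradiction. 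Hence $L=1$, and $f_n\to\bb_*$ a.e.\ together with $\|f_n\|_{\mathrm L^{2^*}(\R^d)}=\|\bb_*\|_{\mathrm L^{2^*}(\R^d)}$ yields, by the Brezis--Lieb lemma, strong convergence $f_n\to\bb_*$ in $\mathrm L^{2^*}(\R^d)$.

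\emph{Main difficulty.} The monotonicity of $\|\nabla f_n\|_{\mathrm L^2(\R^d)}$ and the construction of a bounded monotone functional are routine; the real work is the last step --- upgrading convergence of $\ell(f_n)$ to strong $\mathrm L^{2^*}$-convergence, that is, ruling out loss of compactness through concentration or spreading --- where one must exploit precisely the interplay between the conformal geometry of $U$ and the rigidity of symmetric decreasing functions, and where it is technically cleanest to carry out the limiting arguments on the compact sphere rather than on $\R^d$.
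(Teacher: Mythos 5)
The paper does not prove this statement: it is quoted verbatim as \cite[Theorem~3.3]{CarlenLoss}, so there is no in-paper proof to compare against, and I assess your proposal on its own terms. Your outline does follow the Carlen--Loss blueprint: the functional $\ell(f)=\int_{\R^d} f\,\bb_*^{2^*-1}\,dx$ is non-decreasing along the iteration and $U$-invariant, its maximizers (for $\|f\|_{\mathrm L^{2^*}(\R^d)}=1$, by the equality case in H\"older) are exactly $\bb_*$, the fixed points of $\mathcal RU$ are the ray $\{\lambda\bb_*\}$ by the two-center rigidity, and the Dirichlet-seminorm monotonicity follows immediately from P\'olya--Szeg\H{o} plus conformal invariance of $U$. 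All of this is correct and is the right skeleton.

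The compactness step, which you rightly flag as ``the real work,'' is where the argument does not close. First, to conclude $Ug=(Ug)^*$ from $\ell(f_{n+1})-\ell(f_n)\to 0$ along a subsequence $f_{n_k}\to g$ a.e., you must pass to the limit in $\int\bigl[(Uf_{n_k})^*-Uf_{n_k}\bigr]\bb_*^{2^*-1}\,dx$. But $(Uf_{n_k})^*=f_{n_k+1}$ is controlled only along a \emph{further} subsequence with a possibly different a.e.\ limit $g'$, and rearrangement is not continuous under mere a.e.\ convergence when $\mathrm L^{2^*}$-mass can escape; the step you summarize as ``forces $Ug=(Ug)^*$'' is precisely the nontrivial assertion. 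Second, and more seriously, the argument that $L=1$ does not hold up. You claim a contradiction because ``the escaping scale of $f_{n+1}$ could not match that of $f_n$.'' There is no requirement that those scales match: a geometrically evolving scale $r_{n+1}\approx\lambda\,r_n$ with fixed $\lambda\ne1$ is a perfectly self-consistent scenario for a persistent concentrating or spreading bubble, and the functional $\ell$ is blind to such a bubble because its $\mathrm L^1$-mass, $\lesssim r_n^{(d+2)/2}$ in the concentration regime, tends to zero, so $\ell(f_n)\to L\cdot\ell(\bb_*)=L$ is consistent with $L<1$. Ruling out loss of $\mathrm L^{2^*}$-mass to zero or infinity without a Dirichlet-norm bound (the first assertion is claimed for arbitrary $f\in\mathrm L^{2^*}$, not just $f\in\Hdot$) is exactly the substance of the Carlen--Loss proof, and what you give is a plausibility sketch rather than an argument.
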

Whether $f_n$ satisfies \eqref{farway} for all $n\in\N$ or not, we face an alternative.
\begin{lemma}[{\cite[Lemma~3.5]{DEFFL}}]\label{alternatives} Let $f$ be as in Theorem~\ref{competingsymmetries} and such that \eqref{farway} holds and let $f_n = (\mathcal RU)^n f$. Then either $\inf_{\bb\in\mathcal M} \Vert \nabla f_n - \nabla\bb\Vert_{\mathrm L^2(\R^d)}^2\ge\delta\,\Vert \nabla f_n \Vert_{\mathrm L^2(\R^d)}^2$ for all $n\in\N$, or there exists $ n_0\in\N$ such that
\[
\inf_{\bb\in\mathcal M} \Vert \nabla f_{n_0} - \nabla\bb\Vert_{\mathrm L^2(\R^d)}^2 > \delta\,\Vert \nabla f_{n_0} \Vert_{\mathrm L^2(\R^d)}^2
\quad\mbox{and}\quad \inf_{\bb\in\mathcal M} \Vert \nabla f_{n_0+1} - \nabla\bb\Vert_{\mathrm L^2(\R^d)}^2 < \delta\,\Vert \nabla f_{n_0+1} \Vert_{\mathrm L^2(\R^d)}^2\,.
\]
\end{lemma}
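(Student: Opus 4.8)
The plan is to read the statement off as an elementary dichotomy for the sequence of real numbers
\[
a_n:=\inf_{\bb\in\mathcal M}\Vert\nabla f_n-\nabla\bb\Vert_{\mathrm L^2(\R^d)}^2-\delta\,\Vert\nabla f_n\Vert_{\mathrm L^2(\R^d)}^2\,,\qquad n\in\N\,,
\]
which is well defined because, by Theorem~\ref{competingsymmetries}, $(\Vert\nabla f_n\Vert_{\mathrm L^2(\R^d)})_{n\in\N}$ is a finite, non-increasing sequence. The hypothesis \eqref{farway} applied to $f=f_0$ says precisely that $a_0>0$.

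First I would dispose of the easy branch: if $a_n\ge0$ for every $n\in\N$, this is exactly the first alternative and there is nothing more to prove. Otherwise the set $\{k\in\N:a_k<0\}$ is nonempty, and since $a_0>0$ it does not contain $0$, so it has a minimal element $m\ge1$. Setting $n_0:=m-1\ge0$, one has $a_{n_0+1}=a_m<0$, which is the second displayed inequality of the lemma, and $a_{n_0}\ge0$ by minimality of $m$ (with $a_{n_0}=a_0>0$ when $n_0=0$), which gives the first one.

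The one point requiring care — and what I expect to be the (mild) main obstacle — is that minimality of $m$ only yields $a_{n_0}\ge0$, whereas the statement asks for the strict inequality at $n_0$ when $n_0\ge1$. I would handle the borderline case $a_{n_0}=0$ by observing that it is harmless for the way the lemma is used: the local estimate it feeds into, namely $\Imu(\delta)\ge 4\,\epsilon_0/(d-2)$, is formulated with the \emph{non-strict} constraint $\inf_{\bb\in\mathcal M}\Vert\nabla f-\nabla\bb\Vert_{\mathrm L^2(\R^d)}^2\le\delta\,\Vert\nabla f\Vert_{\mathrm L^2(\R^d)}^2$, so if $a_{n_0}=0$ then $f_{n_0}$ already lies in the local regime and the iteration may be stopped there; equivalently one could read the two alternatives with non-strict inequalities throughout without affecting anything downstream.

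Finally, since the proof proper is so short, I would underline what the lemma accomplishes: the convergence $f_n\to\bb_*\in\mathcal M$ in $\mathrm L^{2^*}(\R^d)$ from Theorem~\ref{competingsymmetries} is not needed to establish the dichotomy itself, but it is what makes the first alternative usable afterwards, guaranteeing that iterating $\mathcal R U$ really does drive the $f_n$ toward the Aubin–Talenti manifold; and the second alternative delivers, at step $n_0+1$, a function that has entered the neighbourhood of $\mathcal M$ controlled by Theorem~\ref{thm:unifboundclose}, so that the comparison can be carried out on the pair $(f_{n_0},f_{n_0+1})$ using $\mathcal E(Uf)=\mathcal E(f)$ together with the monotonicity of the Sobolev deficit under $\mathcal R$.
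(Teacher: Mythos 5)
Your dichotomy argument --- take $m$ to be the first index with $a_m<0$ (which exists in the nontrivial branch and satisfies $m\ge1$ since $a_0>0$ by \eqref{farway}) and set $n_0:=m-1$ --- is correct and is surely the intended proof. You are also right to flag the strictness issue: minimality of $m$ yields only $a_{n_0}\ge 0$, and a sequence with, say, $a_0>0$, $a_1=0$, $a_2<0$ would falsify the lemma exactly as written, since no index then satisfies both strict inequalities simultaneously. This is a minor slip in the statement rather than a flaw in the argument; the correct reading has a non-strict inequality at $n_0$. As you observe, the weakening is harmless downstream: the quantity $\Imu(\delta)$ is defined through the non-strict constraint $\inf_{\bb\in\mathcal M}\Vert\nabla f-\nabla\bb\Vert_{\mathrm L^2(\R^d)}^2\le\delta\,\Vert\nabla f\Vert_{\mathrm L^2(\R^d)}^2$, and in the continuous-rearrangement step one simply allows $\tau_0\in[n_0,n_0+1)$, so that $\tau_0=n_0$ absorbs the borderline case $a_{n_0}=0$. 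Your closing remark is also accurate: the convergence $f_n\to\bb_*$ from Theorem~\ref{competingsymmetries} plays no role in the dichotomy itself, only in making the first alternative usable. In short, the proposal is correct, takes the same elementary route as the paper, and correctly identifies and resolves the one imprecision in the lemma's statement.
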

In the first case, that is if $f_n$ satisfies \eqref{farway} for all $n\in\N$, we have
\[
\lim_{n\to+\infty}\Vert \nabla f _n\Vert^2_2\leq\frac1\delta\,\lim_{n\to+\infty}\,\inf_{\bb\in\mathcal M} \Vert \nabla f_n - \nabla\bb\Vert_2^2= \frac1\delta\left(\lim_{n\to+\infty} \Vert \nabla f_n \Vert^2_2 - S_d\,\Vert f \Vert_{2^*} ^2\right)
\]
where the last equality arises as a consequence of the properties of $(f_n)_{n\in\N}$ (see \cite[Lemma~3.4]{DEFFL}). Combined with the simple estimate
\[
\mathcal E(f) = \frac{\Vert \nabla f \Vert^2_2 - S_d\,\Vert f \Vert_{2^*}^2} {\inf_{\bb\in\mathcal M} \Vert \nabla f - \nabla\bb\Vert^2_2 } \ge \frac{\Vert \nabla f \Vert^2_2 - S_d\,\Vert f \Vert_{2^*}^2} {\Vert \nabla f \Vert^2_2 } \geq \frac{\Vert \nabla f_n \Vert^2_2 - S_d\,\Vert f \Vert_{2^*}^2} {\Vert \nabla f _n\Vert^2_2}\,,
\]
we can take the limit as $n\to+\infty$ and obtain $\mathcal E(f)\ge\delta$. In the second case, we adapt a strategy that has some similarities to one used by Christ in \cite{Christ} and is based on a continuous rearrangement {\it flow} $(\mathsf f_\tau)_{n_0\le \tau < n_0+1}$ with $\mathsf f_{n_0}=Uf_{n_0}$ such~that
\[
\Vert \mathsf f_\tau \Vert_{\mathrm L^{2^*}(\R^d)}= \Vert \mathsf f_{n_0} \Vert_{\mathrm L^{2^*}(\R^d)}\,,\;\tau \mapsto \Vert \nabla \mathsf  f_\tau \Vert_{\mathrm L^2(\R^d)}\;\mbox{\it is nonincreasing, and}\;\lim_{\tau \to n_0+1}  \mathsf  f_\tau=f_{n_0+1}\,.
\]
Choosing the smallest $\tau_0\in(n_0,n_0+1)$ such that $\inf_{\bb\in\mathcal M} \Vert \nabla(\mathsf f_{\tau_0}-\bb) \Vert_{\mathrm L^2(\R^d)}^2 = \delta\,\Vert \nabla \mathsf f_{\tau_0} \Vert_{\mathrm L^2(\R^d)}^2$ and using
$\Vert f\Vert_{\mathrm L^{2^*}(\R^d)}=\Vert \mathsf f_{n_0} \Vert_{\mathrm L^{2^*}(\R^d)}=\Vert\mathsf f_{\tau_0} \Vert_{\mathrm L^{2^*}(\R^d)}$, this gives:
\[
1-S_d\,\frac{\Vert f \Vert^2_{\mathrm L^{2^*}(\R^d)}} {\Vert \nabla f \Vert^2_{\mathrm L^2(\R^d)}}
\ge1-S_d\,\frac{\Vert\mathsf f_{\tau_0} \Vert^2_{\mathrm L^{2^*}(\R^d)}} {\Vert \nabla\mathsf f_{\tau_0} \Vert^2_{\mathrm L^2(\R^d)}}=\delta \, \mathcal E(\mathsf f_{\tau_0})\ge\delta\,\Imu(\delta)\,.
\]
As a consequence, we obtain $C_{d,\rm BE}^{\rm pos}:=\inf_{f\ge0}\mathcal E(f)\ge\min\left\{\delta,\delta\,\Imu(\delta)\right\}=\delta\,\Imu(\delta)$ because $\Imu(\delta)<1$. To build the flow, we refer to \cite{MR1330619,MR1758811} and to \cite{DEFFL} for further references. The existence of $\tau_0$ requires a discussion that can be found in~\cite[Section~3.1.2]{DEFFL}.

\medskip\noindent$\bullet$ The third step is to remove the positivity assumption of Theorem \ref{main} as in \cite[Section~3.2]{DEFFL}. Take $f=f_+-f_-$ with $\nrm{f}{2^*}=1$ and define $m :=\nrm{f_-}{2^*}^{2^*}$. Without loss of generality one may assume that $1-m=\nrm{f_+}{2^*}^{2^*}>1/2$. The positive concave function
\[
h_d(m):=m^\frac{d-2}d+(1-m)^\frac{d-2}d-1
\]
satisfies
\[
2\,h_d(1/2)\,m\le h_d(m)\,,\quad h_d(1/2)= 2^{2/d}-1 \,.
\]
With $\Deficit (f)=\nrm{\nabla f}2^2-S_d \nrm f{2^*}^2$, one finds $g_+\in \mathcal M$ such that
\[
\Deficit (f)\ge C_{d,\rm BE}^{\rm pos} \nrm{\nabla f_+-\nabla g_+}2^2 +\frac{2\,h_d(1/2)}{h_d(1/2)+1} \nrm{\nabla f_-}2^2\,,
\]
and therefore
\[
C_{d,\rm BE}\ge\tfrac12\,\min\left\{\delta\,\Imu(\delta), \frac{2\,h_d(1/2)}{h_d(1/2)+1}\right\}\,.
\]

\noindent$\bullet$ Combining all estimates of the three previous steps completes the proof of Theorem \ref{main}.\qed

\section{Stability of the Gaussian logarithmic Sobolev inequality: proof of Theorem~\ref{logsob}}\label{Sec:stabilitylogSob}

In this section we describe the main steps in the proof of the stability estimate for the Gaussian logarithmic Sobolev inequality \eqref{Ineq:logSob} by considering the large dimensional limit of~\eqref{stabgrandd}, as it appears in \cite[Section~4]{DEFFL}. With $u=f/g_\star$, where $g_\star$ is given by~\eqref{gstar}, Inequality \eqref{stabgrandd} can be rewritten as
\begin{multline*}
\ird{|\nabla u|^2\, g_\star^2\nc}+d\,(d-2)\,\ird{|u|^2\, g_\star^{2^*}\nc}-d\,(d-2)\,\nrm{g_\star}{2^*}^{2^*-2}\nc\(\ird{|u|^{2^*}\, g_\star^{2^*}\nc}\)^{2/2^*}\hspace*{2cm}\\
\ge\frac\beta d\(\ird{|\nabla u|^2\, g_\star^2\nc}+d\,(d-2)\,\ird{|u-g_d/g_\star|^2\,g_\star^{2^*}}\),
\end{multline*}
where $g_d\in\mathcal M$ realizes $\inf_{\bb\in\mathcal M} \Vert \nabla f - \nabla\bb \Vert^2_{\mathrm L^2(\R^d)}$. For some parameters $a_d$, $b_d$ and $c_d$, we can write that $g_d(x)= c_d\,(a_d+|x-b_d|^2)^{1-d/2}$. We rescale the function $u$ according to
\[
u(x)=v\(r_d\,x\)\quad\forall\,x\in\R^d\,,\quad r_d=\sqrt{\tfrac d{2\,\pi}}
\]
and consider the function $w_v^d$ such that $w_v^d\(r_d\,x\)=g_d(x)/g_\star(x)$. Hence
\begin{multline*}
\irdmu{|\nabla v|^2\(1+\tfrac1{r_d^2}\,|x|^2\)^2}
\ge2\,\pi\,(d-2)\(\(\irdmu{|v|^{2^*}}\)^{2/2^*}-\irdmu{|v|^2}\)\hspace*{2cm}\\
+\frac\beta d\(\irdmu{|\nabla v|^2}+2\,\pi(d-2)\nc\,\irdmu{|v-w_v^d|^2}\)
\end{multline*}
where $d\mmu_d=Z_d^{-1}\,g_\star^{2^*}\,dx$ is the probability measure given by
\[
d\mmu_d(x):=\frac1{Z_d}\(1+\tfrac1{r_d^2}\,|x|^2\)^{-d}\,dx\quad\mbox{with}\quad Z_d=\frac{2^{1-d}\,\sqrt\pi}{\Gamma\(\tfrac{d+1}2\)}\(\frac d2\)^\frac d2 \,.
\]
Our goal is to take the limit $d\to +\infty$ when one considers functions $v(x)$ depending only on $y\in\R^N$, with $x=(y,z)\in\R^N\times\R^{d-N}\approx\R^d$, for some fixed integer $N$. With $|x|^2=|y|^2+|z|^2$, notice that
\[
1+\tfrac1{r_d^2}\,|x|^2=1+\tfrac1{r_d^2}\,\big(|y|^2+|z|^2\big)=\(1+\tfrac1{r_d^2}\,|y|^2\)\(1+\tfrac{|z|^2}{r_d^2+|y|^2}\)
\]
and, as a consequence,
\begin{align*}
&\textstyle\lim_{d\to+\infty}\(1+\tfrac1{r_d^2}\,|y|^2\)^{-\frac{N+d}2}=e^{-\pi\,|y|^2}\,,\\
&\textstyle\lim_{d\to+\infty}\irdmu{|v(y)|^2}=\irNg{|v|^2}\,,\\
&\textstyle\lim_{d\to+\infty}\irdmu{|\nabla v|^2\(1+\tfrac1{r_d^2}\,|x|^2\)^2}=4\,\irNg{|\nabla v|^2}\,,
\end{align*}
where $d\gamma(y)=e^{-\pi\,|y|^2}\,dy$. However, the function $w_v^d$ depends on $d$ and the main difficulty is to obtain enough estimates on the parameters $a_d$, $b_d$ and $c_d$ to pass to the limit after integrating in all integrals with respect to $z$, which completes the proof of~\eqref{LSI:stab}. See \cite[Section~4]{DEFFL} for further details. \qed

\section{Stability of the logarithmic Sobolev inequality: a new proof of Theorem~\ref{logsob}}\label{sec:logsob}

As explained in the Introduction and in Section~\ref{Sec:stabilitylogSob}, in \cite{DEFFL} we proved Theorem~\ref{logsob} as a corollary of Theorem~\ref{main}. In this section we give a new and complete proof of  Theorem~\ref{logsob} that is independent of  Theorem~\ref{main}.

This gives us the opportunity to present some of the details both of the infinite dimensional limit mentioned in Section \ref{Sec:stabilitylogSob} and of the combination of continuous symmetrization and discrete competing symmetries flows mentioned in Section \ref{Sec:stabilitySob}. In both cases the arguments in the present section are slightly simpler, but illustrate well the main underlying ideas. We also emphasize that, while it may have seemed that the latter argument relies on the conformal invariance of the Sobolev inequality (which enters through the operator $U$ in the competing symmetries argument), this is not the case: in the setting of log-Sobolev~\eqref{LSI}, where there is no such conformal symmetry, there is another operator that serves the same purpose. 

\smallskip
Just like in Section \ref{Sec:stabilitySob}, we prove the quantitative version of the sharp logarithmic Sobolev inequality (Theorem \ref{logsob}) in two steps, one close and one far from the set of optimizers. Let us start with the following result that replaces Theorem~\ref{thm:unifboundclose} here. In fact, it is a consequence of Theorem~\ref{thm:unifboundclose}.
\begin{theorem}\label{unifboundcloselogsob}
There are explicit constants $\eta>0$ and $\delta\in(0,1/2)$ such that for all $N\in\N$ and for all for all nonnegative $u=1+r\in \mathrm H^1(\mathbb S^d)$ satisfying
\begin{equation}
\label{eq:smalllogsob}
\int_{\R^N} r^2\,d\gamma \le \frac\delta{1-\delta}
\end{equation}
and
\begin{equation}
\label{eq:orthologsob}
\int_{\R^N} r\,d\gamma = 0 = \int_{\R^N} x_j\,r\,d\gamma\,,
\quad j=1,\,2,\ldots,N\,,
\end{equation}
one has
\[
\int_{\R^N} |\nabla u|^2\,d\gamma - \pi \int_{\R^N} |u|^2 \ln\(\frac{|u|^2}{\| u \|_{\mathrm L^2(\gamma)}^2}\)\,d\gamma\ge \eta \int_{\R^N} r^2\,d\gamma\,.
\]
\end{theorem}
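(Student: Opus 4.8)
The plan is to obtain Theorem~\ref{unifboundcloselogsob} as the large-dimensional limit of the local estimate on the sphere, Theorem~\ref{thm:unifboundclose}, by the same ``lifting'' mechanism that is used in Section~\ref{Sec:stabilitylogSob} to pass from the Sobolev stability on $\mathbb S^d$ to the Gaussian stability, but now applied to the \emph{local} inequality rather than the global one. First I would fix $N$ and consider a nonnegative $u=1+r$ on $\R^N$ satisfying \eqref{eq:smalllogsob} and \eqref{eq:orthologsob}. For each $d>N$ I would build a function $u_d=1+r_d$ on $\mathbb S^d$ that depends only on $N$ of the coordinates (equivalently, via stereographic projection, a function on $\R^d$ depending only on $y\in\R^N$ where $x=(y,z)$), normalized so that it is close to the constant $1$ and satisfies the two orthogonality conditions $\isd{r_d}=0$ and $\isd{\omega\,r_d}=0$ required by Theorem~\ref{thm:unifboundclose}; the last $d-N$ components of the vector orthogonality condition are automatic by symmetry in $z$, and the $\omega_{N+1},\dots$ directions integrate to zero. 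The smallness hypothesis \eqref{eq:smalllogsob} guarantees, for $d$ large, the smallness hypothesis $\nrmS{r_d}{2^*}^2\le\delta/(1-\delta)$ of Theorem~\ref{thm:unifboundclose}, since $\nrmS{r_d}{2^*}^2\to\irNg{r^2}$ and, more carefully, $\nrmS{r_d}{2^*}^2\le (1+o(1))\irNg{r^2}$ as $d\to\infty$ because $2^*\to2$ and the measure $\mu_d$ converges to $\gamma$ on functions of $y$.

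Next I would apply Theorem~\ref{thm:unifboundclose} to $u_d$, giving
\[
\nrmS{\nabla u_d}2^2+\bfA\,\nrmS{u_d}2^2-\bfA\,\nrmS{u_d}{2^*}^2\ge\frac{4\,\epsilon_0}{d-2}\(\nrmS{\nabla r_d}2^2+\bfA\,\nrmS{r_d}2^2\),
\]
with $\bfA=\tfrac14 d(d-2)$, and then rescale exactly as in Section~\ref{Sec:stabilitylogSob} ($u_d(x)=v(r_d x)$ with $r_d=\sqrt{d/(2\pi)}$, $v$ depending only on $y\in\R^N$). The key limits recorded in that section give $\nrmS{\nabla u_d}2^2\to 4\irNg{|\nabla v|^2}$ after the rescaling absorbs the factor, $\bfA(\nrmS{u_d}2^2-\nrmS{u_d}{2^*}^2)$ converges — after expanding $(\,\cdot\,)^{2/2^*}$ and using $2^*-2=\theta\to0$ together with the subtraction of the $\mathrm L^2$ norm — to $\pi\irNg{|v|^2\ln(|v|^2/\|v\|_{\mathrm L^2(\gamma)}^2)}$, and on the right-hand side $\tfrac{4\epsilon_0}{d-2}\,\bfA\,\nrmS{r_d}2^2=\epsilon_0\,d\,\nrmS{r_d}2^2\to 2\pi\,\epsilon_0\irNg{r^2}$ while $\tfrac{4\epsilon_0}{d-2}\nrmS{\nabla r_d}2^2\to0$ (it is lower order after the rescaling, the gradient term carrying a $1/r_d^2=2\pi/d$). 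Passing to the limit $d\to+\infty$ yields
\[
4\irNg{|\nabla v|^2}-\pi\irNg{|v|^2\ln\Big(\tfrac{|v|^2}{\|v\|_{\mathrm L^2(\gamma)}^2}\Big)}\ge 2\pi\,\epsilon_0\irNg{r^2}\,;
\]
renaming $v$ back to $u$ and the final rescaling of the domain (or equivalently writing $d\gamma(x)=e^{-\pi|x|^2}dx$) absorbs the factor $4$ into the gradient, so with $\eta:=2\pi\,\epsilon_0$ one obtains the claimed inequality. I would take $\delta$ to be the same constant as in Theorem~\ref{thm:unifboundclose} (it does not degenerate) and $\eta=2\pi\,\epsilon_0>0$ explicit.

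The main obstacle is the control of the two ``deficit'' quantities under the limit $d\to\infty$ with explicit, $d$-uniform bounds, rather than mere convergence: one must show that $\bfA(\nrmS{u_d}2^2-\nrmS{u_d}{2^*}^2)$ converges to the Gaussian entropy functional and not merely that each piece converges, which requires expanding $\|u_d\|_{2^*}^2=\big(\isd{|u_d|^{2^*}}\big)^{2/2^*}$ with the exponent $2/2^*=1-\theta/2$, $\theta=4/(d-2)\to0$, and identifying $\lim_{d}\tfrac1\theta\big(\isd{|u_d|^{2+\theta}}-(\isd{|u_d|^2})^{1+\theta/2}\big)$ with $\tfrac14\irNg{|u|^2\ln(|u|^2/\|u\|_{\mathrm L^2(\gamma)}^2)}$; this is the standard ``logarithmic Sobolev as a limit of $\mathrm L^p$'' computation but must be done with enough uniformity (e.g. dominated convergence against the measures $\mu_d\to\gamma$, using the smallness \eqref{eq:smalllogsob} and, if needed, an a priori $\mathrm L^\infty$ or higher integrability reduction as in \cite{DEFFL}). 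A secondary technical point is verifying that the hypothesis $\nrmS{r_d}{2^*}^2\le\delta/(1-\delta)$ truly holds for all large $d$ starting only from \eqref{eq:smalllogsob}, i.e. that the $\mathrm L^{2^*}(\mu_d)$ norm of a function of $y$ does not overshoot its $\mathrm L^2(\gamma)$ norm in the limit — which follows from $2^*\downarrow 2$ and $\mu_d\to\gamma$, but should be stated with care. Everything else (the orthogonality bookkeeping, the rescaling identities, the vanishing of the rescaled gradient remainder) is routine given the computations already displayed in Sections~\ref{Sec:stabilitySob} and~\ref{Sec:stabilitylogSob}.
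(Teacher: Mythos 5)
Your plan coincides in all essentials with the paper's proof: both lift Theorem~\ref{thm:unifboundclose} (rather than Theorem~\ref{main}) to the sphere of radius $\rho_d=\sqrt{d/(2\pi)}$, apply it to a correction $R_d$ of $r$ that depends only on the first $N$ coordinates, verify that the orthogonality for $j>N$ is automatic by symmetry while the $j\le N$ and constant components must be explicitly subtracted, and then pass $d\to+\infty$ identifying the limit of $\frac{d-2}{2}\big(\|1+R_d\|_{2^*}^2-\|1+R_d\|_2^2\big)$ with the relative entropy and extracting $\eta=2\pi\,\epsilon_0$ after discarding the gradient term on the right. One cosmetic difference: the paper rescales directly to the sphere $\Sigma_d\subset\R^{d+1}$ and integrates against $d\mu_d$ there, which means all $\mathrm L^2$ norms pass unchanged and no conformal factors arise, whereas you phrase the rescaling through stereographic projection to $\R^d$ as in Section~\ref{Sec:stabilitylogSob}; your intermediate bookkeeping (the factor of $4$ on the gradient, the claim $\epsilon_0\,d\,\|r_d\|_2^2\to2\pi\epsilon_0\int r^2\,d\gamma$ which would require $\|r_d\|_2^2\sim(2\pi/d)\int r^2\,d\gamma$ rather than $\|r_d\|_2^2\to\int r^2\,d\gamma$) reflects that mismatch and would need to be straightened out, but once the rescaling is done directly on $\Sigma_d$ these factors disappear and the argument is exactly the paper's. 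You also correctly flag the two genuinely delicate points — the uniform-in-$d$ identification of the deficit with the entropy, and the strictness issue in \eqref{eq:smalllogsob} needed to ensure \eqref{eq:smallresc} for large $d$ — which the paper handles by the $p\to1^+$ $\mathrm L^p$-to-entropy limit and an approximation at the end.
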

\noindent The constant $\delta$ coincides with the corresponding constant in Theorem~\ref{thm:unifboundclose} and $\eta = 2\,\pi\,\epsilon_0$.\begin{proof} Notice that $x\in \mathrm L^2(\gamma)$, so the orthogonality constraints raise no integration issues.
We denote $\Sigma_d:= \{ x\in\R^{d+1} :\,|x| = \rho_d\}$ with $\rho_d:=\sqrt{d/(2\,\pi)}$. The factor of $1/(2\,\pi)$ in the definition of $\rho_d$ is necessary to get the $\pi$ in the exponent of the Gaussian density. We integrate on $\Sigma_d$ with respect to the uniform probability measure $d\mu_d$. Here, with a straightforward abuse, we use the same notation for the uniform probability measure on $\Sigma_d$ as on the unit sphere. By rescaling our result in Theorem~\ref{thm:unifboundclose} we find that
\begin{multline}
\label{eq:ineqresc}
\int_{\Sigma_d} |\nabla R|^2\,d\mu_d - \pi \,\frac{d-2}2 \(\(\int_{\Sigma_d} (1+R)^\frac{2\,d}{d-2}\,d\mu_d\)^\frac{d-2}d - \int_{\Sigma_d} (1+R)^2\,d\mu_d\)\\
\ge 2\,\pi\,\epsilon_0 \int_{\Sigma_d} \(\frac1\pi\,\frac 2{d-2}\,|\nabla R|^2 + R^2\)\,d\mu_d\,.
\end{multline}
This inequality is valid for all $R\in \mathrm H^1(\Sigma_d)$ such that
\begin{equation}
\label{eq:smallresc}
\(\int_{\Sigma_d} R^\frac{2\,d}{d-2}\,d\mu_d\)^\frac{d-2}d \le \frac\delta{1-\delta}
\end{equation}
and
\begin{equation}
\label{eq:orthoresc}
\int_{\Sigma_d} R\,d\mu_d = 0 = \int_{\Sigma_d} x_j\,R\,d\mu_d \,,
\quad j=1,\ldots, d+1\,.
\end{equation}
Given a function $r\in \mathrm H^1(\gamma)$ and $d>N$, we apply this inequality to the function
\[
R_d(x) := r(x_1,\ldots,x_N) - \int_{\Sigma_d} r\,d\mu_d - 2\,\pi\,\frac{d+1}d \sum_{n=1}^N x_n \int_{\Sigma_d} y_n\,r(y_1,\ldots,y_N)\,d\mu_d(y)
\]
for $x\in\Sigma_d$. This function satisfies the orthogonality conditions~\eqref{eq:orthoresc}. Note here that the functions $\sqrt{2\,\pi}\,\sqrt{(d+1)/d}\,x_j$ are $\mathrm L^2$-normalized on $\Sigma_d$.

We now use the well-known fact that, as $d\to+\infty$, the marginal of $d\mu_d$ corresponding to the first~$N$ coordinates converges to $d\gamma$. Thus,
\begin{align*}
& \lim_{d\to+\infty}\int_{\Sigma_d} |\nabla r|^2\,d\mu_d = \int_{\R^N} |\nabla r|^2\,d\gamma\,,
& & \lim_{d\to+\infty}\int_{\Sigma_d} r^2\,d\mu_d = \int_{\R^N} r^2\,d\gamma\,,\\
& \lim_{d\to+\infty}\int_{\Sigma_d} r\,d\mu_d = \int_{\R^N} r\,d\gamma = 0\,,
& & \lim_{d\to+\infty}\int_{\Sigma_d} y_n\,r(y_1,\ldots,y_N)\,d\mu_d(y) = \int_{\R^N} y_n\,r\,d\gamma = 0\,.
\end{align*}
From this we conclude easily that
\[
\lim_{d\to+\infty}\int_{\Sigma_d} |\nabla R_d|^2\,d\mu_d = \int_{\R^N} |\nabla r|^2\,d\gamma\,,
\quad
\lim_{d\to+\infty}\int_{\Sigma_d} R_d^2\,d\mu_d = \int_{\R^N} r^2\,d\gamma\,.
\]
With some modest amount of effort one also finds that
\[
\lim_{d\to+\infty}\int_{\Sigma_d} R_d^\frac{2\,d}{d-2}\,d\mu_d = \int_{\R^N} r^2\,d\gamma\,.
\]
Assuming that the inequality in~\eqref{eq:smalllogsob} is strict, the same is true for the left side when $d$ is sufficiently large, and consequently the smallness condition~\eqref{eq:smallresc} holds when $d$ is sufficiently large. Thus, inequality~\eqref{eq:ineqresc} is valid for all sufficiently large $d$. The equality case in~\eqref{eq:smalllogsob} can be obtained at the very end by a simple approximation argument. 

Now, we drop the gradient term in the right side and letting $d\to+\infty$ we infer that
\begin{equation*}
\int_{\R^N} |\nabla r|^2\,d\gamma - \pi\,\limsup_{d\to+\infty} \frac{d-2}2 \(\(\int_{\Sigma_d} (1+R_d)^\frac{2\,d}{d-2}\,d\mu_d\)^\frac{d-2}d \kern-12pt- \int_{\Sigma_d} (1+R_d)^2\,d\mu_d\)\ge 2\,\pi\,\epsilon_0 \int_{\R^N} r^2\,d\gamma\,.
\end{equation*}
Finally, we verify that
\begin{multline*}
\limsup_{d\to+\infty} \frac{d-2}2 \(\(\int_{\Sigma_d} (1+R_d)^\frac{2\,d}{d-2}\,d\mu_d\)^\frac{d-2}d \kern-12pt- \int_{\Sigma_d} (1+R_d)^2\,d\mu_d\)\\ = \int_{\R^N} (1+r)^2\, \ln\(\frac{(1+r)^2}{\| 1+r \|_{\mathrm L^2(\gamma)}^2}\)\,d\gamma\,.
\end{multline*}
In fact, if the orthogonality conditions were not present and the marginals would already be equal to their limit, this would follow from the fact that
\[
\lim_{p\to 1^+} \frac1{p-1} \(\(\int_{\R^N} h^p\,d\gamma\)^{1/p} - \int_{\R^N} h\,d\gamma\) = \int_{\R^N} h \ln\(\frac{h}{\int_{\R^N} h\,d\gamma}\)d\gamma\,,
\]
valid on any measure space for any nonnegative function $h$ that satisfies $h\in \mathrm L^1\cap \mathrm L^{p_0}(\gamma)$ for some \hbox{$p_0>1$}. Proving the latter fact is simple, as well as including the effect of the orthogonality conditions and the convergence of the marginals, so we shall omit it. These remarks complete the proof Theorem~\ref{unifboundcloselogsob}.
\end{proof}

We emphasize that in the previous proof we did not use Theorem~\ref{main}, but rather Theorem~\ref{thm:unifboundclose}. In this way we avoid having to control the distance to the set of optimizers in the high-dimensional limit, which seems harder than verifying the orthogonality conditions.

\begin{proof}[Proof of Theorem~\ref{logsob}]
As in the proof of Theorem~\ref{main}, we first prove the result for nonnegative functions and then extend it to sign changing solutions. Let us denote by $\kappa^{\rm pos}$ the stability constant in the stability inequality restricted to nonnegative functions.

\medskip\noindent\emph{Step 1.} Let $\eta$ and $\delta$ be as in Theorem~\ref{unifboundcloselogsob}. For $0\le u\in \mathrm H^1(\gamma)$ we distinguish two cases.

\smallskip\noindent$\bullet$ The first case is where
\[
\inf_{a\in\R^N\!,\,c\in\R} \int_{\R^N} \big|u - c\,e^{a\cdot x}\big|^2\,d\gamma \le \delta \int_{\R^N} |u|^2\,d\gamma\,.
\]
The infimum on the left-hand side is attained at some $a\in\R^N$ and $c\in\R$, as can be checked by optimizing $\int_{\R^N}\big|v-c\,e^{|a|^2/(2\,\pi)-\pi\,|x-a/\pi|^2/2}\big|^2\,dx$ where $v(x):=u(x)\,e^{-\,\pi\,|x|^2/2}$. Let
\[
\tilde u(y) := e^{-\,\,y\cdot a - \frac{|a|^2}{2\,\pi}}\,u\big(y+\tfrac a\pi\big)\,.
\]
Then, by a simple computation involving an integration by parts and a change of variables,
\[
\int_{\R^N} |\nabla\tilde u|^2\,d\gamma - \pi \int_{\R^N} |\tilde u|^2 \ln\(\frac{|\tilde u|^2}{\|\tilde u\|_{\mathrm L^2(\gamma)}^2}\)\,d\gamma =
\int_{\R^N} |\nabla u|^2\,d\gamma - \pi \int_{\R^N} |u|^2 \ln\(\frac{|u|^2}{\|u\|_{\mathrm L^2(\gamma)}^2}\)\,d\gamma\,.
\]
Therefore, the deficit of $\tilde u$ coincides with that of $u$, while the infimum for $\tilde u$ among all functions of the form \eqref{eq:optim-logSob} is attained at the constant $c\,\exp\big(|a|^2/(2\,\pi)\big)$. Finally, by multiplying $\tilde u$ with a constant, we may assume that this constant is equal to one. To summarize, we may assume without loss of generality that the infimum in the theorem is attained at $a=0$ and $c=1$.

Let us set $r:= u-1$. Then the minimality implies that $r$ satisfies the orthogonality conditions~\eqref{eq:orthologsob}. Moreover, we have
\[
\int_{\R^N} r^2\,d\gamma \le \delta \int_{\R^N} |u|^2\,d\gamma = \delta \(1 + \int_{\R^N} r^2\,d\gamma\),
\]
so the smallness condition~\eqref{eq:smalllogsob} is satisfied and we can apply Theorem~\ref{unifboundcloselogsob}. This yields the inequality in the theorem with a stability constant $\eta$.

\smallskip\noindent$\bullet$ Next, we consider the case where
\[
\inf_{a\in\R^N\!,\,c\in\R} \int_{\R^N} (u - c\,e^{x\cdot a})^2\,d\gamma > \delta \int_{\R^N} |u|^2\,d\gamma\,.
\]
We argue similarly as we did in Section~\ref{Sec:stabilitySob} concerning the Sobolev inequality, but there are some differences in this case.

For $f\in \mathrm L^2(\gamma)$ we denote by $Uf$ its Gaussian rearrangement, that is, the function on $\R^N$ whose superlevel sets have the form $\{x\in\R^N:\,x_1 < \mu\}$ for some $\mu\in\R$ and have the same $\gamma$-measure as the corresponding superlevel sets of $f$. Moreover, we denote
\[
Vf := e^{\frac\pi2\,|x|^2}\,\mathcal R \( e^{-\,\frac\pi2\,|x|^2}\,f\)\,,
\]
where $\mathcal R$ is, as before, the Euclidean rearrangement. Then, as shown in~\cite[Theorem 4.1]{CarlenLoss}, for any $0\le f\in \mathrm L^2(\gamma)$ one has
\[
f_n := (VU)^n f \to \|f\|_{\mathrm L^2(\gamma)}\quad\text{in}\quad\mathrm L^2(\gamma)\,.
\]
Moreover, $\|f_n\|_{\mathrm L^2(\gamma)} = \|f\|_{\mathrm L^2(\gamma)}$ and
\[
n\mapsto \int_{\R^N} |\nabla f_n|^2\,d\gamma - \pi \int_{\R^N} f_n^2 \ln\(\frac{f_n^2}{\| f_n \|_{\mathrm L^2(\gamma)}^2}\)\,d\gamma
\]
is nonincreasing. This is the analogue of Theorem \ref{competingsymmetries} in the present case.

We apply this procedure to our function $u$, which we assume here to be nonnegative, and obtain a sequence of functions $u_n$ with constant $\mathrm L^2(\gamma)$-norm. Moreover, since
\[
\inf_{a,\,c} \| u_n - c\,e^{a\cdot x} \|_{\mathrm L^2(\gamma)} \le \big\| u_n - \|u\|_{\mathrm L^2(\gamma)} \big\|_{\mathrm L^2(\gamma)} \to 0\quad\mbox{as}\quad n\to+\infty\,,
\]
there is an $n_0\in\N$ such that
\[
\inf_{a,\,c} \| u_{n_0} - c\,e^{a\cdot x} \|_{\mathrm L^2(\gamma)}^2\ge \delta\,\|u\|_{\mathrm L^2(\gamma)}^2 > \inf_{a,\,c} \| u_{n_0+1} - c\,e^{a\cdot x} \|_{\mathrm L^2(\gamma)}^2\,.
\]
This replaces Lemma \ref{alternatives}. We have
\begin{align*}
\frac{\int_{\R^N} |\nabla u|^2\,d\gamma - \pi \int_{\R^N} |u|^2 \ln\(\frac{|u|^2}{\| u \|_{\mathrm L^2(\gamma)}^2}\)\,d\gamma }{\inf_{a,\,c} \| u - c\,e^{a\cdot x} \|_{\mathrm L^2(\gamma)}^2}
&\ge \frac{\int_{\R^N} |\nabla u|^2\,d\gamma - \pi \int_{\R^N} |u|^2 \ln\(\frac{|u|^2}{\| u \|_{\mathrm L^2(\gamma)}^2}\)\,d\gamma }{\| u \|_{\mathrm L^2(\gamma)}^2}\\
&\ge \frac{\int_{\R^N} |\nabla u_{n_0}|^2\,d\gamma - \pi \int_{\R^N} u_{n_0}^2 \ln\(\frac{u_{n_0}^2}{\| u_{n_0} \|_{\mathrm L^2(\gamma)}^2}\)\,d\gamma }{\| u \|_{\mathrm L^2(\gamma)}^2}\,.
\end{align*}
We now use a continuous rearrangement flow to connect $u_{n_0}$ to $u_{n_0+1}$. More precisely, we a consider a family of functions $(\mathsf u_\tau)_{n_0\le\tau\le n_0+1}$, where $\mathsf u_{n_0}:=Uu_{n_0}$ and $\mathsf u_{n_0+1}:=u_{n_0+1}$. We define $\mathsf u_\tau$ as $e^{\frac{\pi}{2}\,|x|^2}$ times the continuous (Euclidean) rearrangement of $e^{-\frac{\pi}{2}\,|x|^2}\,U u_{n_0}$ at parameter $\tau$. In the same way as in \cite[Lemma 36]{DEFFL}, one sees that
\[
\tau \mapsto \inf_{a,\,c} \| \mathsf u_\tau - c\,e^{a\cdot x} \|_{\mathrm L^2(\gamma)}^2
\]
is continuous, and therefore there is a $\tau_0\in[n_0,n_0+1)$ such that
\[
\inf_{a,\,c} \| \mathsf u_{\tau_0} - c\,e^{a\cdot x} \|_{\mathrm L^2(\gamma)}^2 = \delta\, \|u\|_{\mathrm L^2(\gamma)}^2\,.
\]
It follows that
\begin{multline*}
\frac{\int_{\R^N} |\nabla u_{n_0}|^2\,d\gamma - \pi \int_{\R^N} u_{n_0}^2 \ln\(\frac{u_{n_0}^2}{\| u_{n_0} \|_{\mathrm L^2(\gamma)}^2}\)\,d\gamma }{\| u \|_{\mathrm L^2(\gamma)}^2}\\
\ge \frac{\int_{\R^N} |\nabla \mathsf u_{\tau_0}|^2\,d\gamma - \pi \int_{\R^N} \mathsf u_{\tau_0}^2 \ln\(\frac{\mathsf u_{\tau_0}^2}{\| \mathsf u_{\tau_0} \|_{\mathrm L^2(\gamma)}^2}\)\,d\gamma }{\| u \|_{\mathrm L^2(\gamma)}^2}\\
= \delta\,\frac{\int_{\R^N} |\nabla \mathsf u_{\tau_0}|^2\,d\gamma - \pi \int_{\R^N} \mathsf u_{n_0}^2 \ln\(\frac{\mathsf u_{\tau_0}^2}{\| \mathsf u_{\tau_0} \|_{\mathrm L^2(\gamma)}^2}\)\,d\gamma }{\inf_{a,\,c} \| \mathsf u_{\tau_0} - c\,e^{a\cdot x} \|_{\mathrm L^2(\gamma)}^2}\,.
\end{multline*}
We can apply the result in the first case to the function $\mathsf u_{\tau_0}$ and infer that the right side is larger or equal than $\kappa^{\rm pos}:=\delta\,\eta$. This concludes the proof in the case of nonnegative functions.

\medskip\noindent\emph{Step 2.} We now prove the theorem in the general case, that is, for sign-changing functions.

We shall use the notation
\[
D(v) := \int_{\R^N} |\nabla v|^2\,d\gamma - \pi \int_{\R^N} v^2 \ln\(\frac{v^2}{\|v\|_{\mathrm L^2(\gamma)}^2}\)\,d\gamma
\quad\text{for}\,v\in \mathrm H^1(\gamma)\,.
\]
Let $u=u_+-u_-\in \mathrm H^1(\gamma)$. By homogeneity we can assume $\|u\|_{\mathrm L^2(\gamma)}=1$. Replacing $u$ by $-\,u$ if necessary, we can also assume that
\[
m:= \|u_-\|_{\mathrm L^2(\gamma)}^2 \in [0,\tfrac12]\,.
\]
Then
\[
D(u) = D(u_+) + D(u_-) + \pi\,h(m)
\]
with
\[
h(p) := -\,\big( p\ln p + (1-p) \ln(1-p) \big)\,.
\]
Since the function $p\mapsto h(p)$ is monotone increasing and concave on $[0,1/2]$, it holds that
\[
h(p)\ge (2\ln 2)\,p\quad\forall\,p\in[0,\tfrac12]\,.
\]
Thus, with $\kappa^{\rm pos}$ denoting the constant from Step 1,
\begin{align*}
D(u) &\ge D(u_+) + (2\,\pi\, \ln 2)\, m\ge \kappa^{\rm pos} \inf_{a,\,c}\|u_+ - c\,e^{a\cdot x} \|_{\mathrm L^2(\gamma)}^2 + (2\,\pi\, \ln 2)\, \|u_-\|_{\mathrm L^2(\gamma)}^2\\
&\ge \frac12\, \min\big\{\kappa^{\rm pos},\, 2\,\pi\, \ln 2 \big\}\,\inf_{a,\,c} \| u - c\,e^{a\cdot x} \|_{\mathrm L^2(\gamma)}^2\,.
\end{align*}
This proves the inequality in the general case, with $\kappa = \frac12\, \min\big\{\kappa^{\rm pos},\, 2\,\pi\, \ln 2 \big\}$. It is straightforward to verify that $\kappa = \beta\,\pi/2$, $\beta$ being the constant in the statement of Theorem \ref{main}. This ends our second proof of Theorem \ref{logsob}.
\end{proof}

\bigskip{\small\noindent\Emph{Acknowledgements.}\/ Partial support through US National Science Foundation grants DMS-1954995 (R.L.F.) and DMS-2154340 (M.L.), as well as through the Deutsche Forschungsgemeinschaft (DFG, German Research Foundation) Germany's Excellence Strategy EXC-2111-390814868 (R.L.F.) and through TRR 352 – Project-ID 470903074 (R.L.F.), the French National Research Agency (ANR) project {\it Conviviality} (ANR-23-CE40-0003, J.D.) and the European Research Council under the Grant Agreement No. 721675 (RSPDE) {\it Regularity and Stability in Partial Differential Equations} (A.F.).\\ The authors are grateful to an anonymous referee for a careful rereading which helped to improve the manuscript.}\\
{\tiny\copyright~2024 by the authors. Reproduction of this article by any means permitted for noncommercial purposes. \hbox{\href{https://creativecommons.org/licenses/by/4.0/legalcode}{CC-BY 4.0}}}

\bigskip\raggedbottom
\bibliographystyle{plain}


\bigskip\begin{center}\rule{2cm}{0.5pt}\end{center}\bigskip

\end{document}